\documentclass[reqno,11pt]{amsart}

\usepackage{euscript}
\usepackage{enumerate}
\usepackage{amsmath}
\usepackage{mathrsfs}
\usepackage{mathtools}
\usepackage{verbatim}
\usepackage{enumitem}
\usepackage[initials]{amsrefs}
\usepackage{amssymb}
\usepackage{comment}
\usepackage{color}
\usepackage{amsthm}
\DeclareMathOperator*{\essinf}{ess\,inf}

\usepackage{bbm}
\usepackage{hyperref}

\theoremstyle{plain}
\newtheorem{thm}{Theorem}[section]
\newtheorem{prop}[thm]{Proposition}
\newtheorem{cor}[thm]{Corollary}

\newtheorem{lemma}[thm]{Lemma}

\newtheorem{defi}[thm]{Definition}

\theoremstyle{remark}
\newtheorem{rem}[thm]{Remark}

\theoremstyle{definition}
\newtheorem{example}[thm]{Example}

 \newcommand\Cpx{{\mathbb C}}

 \newcommand\Mcal{{\mathcal{M}}}

 \newcommand\restrict{{\upharpoonright}}

\newcommand{\BH}{\mathcal{B}(\mathcal{H})}
\newcommand{\M}{\mathcal{M}}
\newcommand{\K}{\mathcal{K}}
\newcommand{\Hsp}{\mathcal{H}}
\newcommand{\C}{\mathbb{C}}

\newcount\theTime
\newcount\theHour
\newcount\theMinute
\newcount\theMinuteTens
\newcount\theScratch
\theTime=\number\time
\theHour=\theTime
\divide\theHour by 60
\theScratch=\theHour
\multiply\theScratch by 60
\theMinute=\theTime
\advance\theMinute by -\theScratch
\theMinuteTens=\theMinute
\divide\theMinuteTens by 10
\theScratch=\theMinuteTens
\multiply\theScratch by 10
\advance\theMinute by -\theScratch

\def\today{{\number\day\space
 \ifcase\month\or
  January\or February\or March\or April\or May\or June\or
  July\or August\or September\or October\or November\or December\fi
 \space\number\year}}

\newcommand\HEu{{\EuScript H}}                   

\newcommand\norm[1]{\ensuremath{\left\vert\left\vert #1 \right\vert\right\vert}}

\begin{document}

\title[]{Decomposability of Operators in Type $\mathrm{I}_k$ von Neumann Algebras}

\author[Karri]{Ajay Kumar Karri}
 \address{Ajay Kumar Karri, Department of Mathematics, Texas A\&M University, College Station, TX 77843-3368, USA.}
 \email{ajay85426@tamu.edu}

\subjclass[2010]{47C15, 47A11, 47B40}

\keywords{type I von Neumann algebra, Decomposability, spectrality, Norm convergence property}

\begin{abstract}
Let $\mathcal{H}$ be a complex Hilbert space and $\BH$ be the algebra of all bounded linear operators on $\mathcal{H}$. For $A \in \BH$, we refer to the sequence $\{|A^{n}|^{1/n}\}_{n\in\mathbb{N}}$ as the normalized power sequence of $A$. In this article, we study the norm convergence property, i.e. convergence of normalized power sequence in the norm topology for operators belonging to type $\mathrm{I}_k$ von Neumann algebras acting on a separable complex Hilbert space. By utilizing continuous upper-triangular forms via unitary conjugations, we construct specific projection-valued families to prove that every operator in a type $\mathrm{I}_k$ von Neumann algebra is decomposable. As a consequence, this immediately establishes that every such operator possesses the norm convergence property, extending recent results known for matrices with complex-valued entries, compact operators on a separable Hilbert space, spectral operators, and Riesz operators. Finally, we provide a counterexample within the type $I_\infty$ factor $\mathcal{B}(l^2(\mathbb{N}))$ to demonstrate that this convergence property generally fails when the dimension $k$ is infinite.
\end{abstract}

\date{\today}

\maketitle

\section{Introduction}
 Let $\mathcal{H}$ be a complex Hilbert space and $\BH$ denote the algebra of all bounded linear operators on $\mathcal{H}$. For an operator $A \in \mathcal{B}(\mathcal{H})$, Nayak and Shekhawat \cite{NS25} referred to the sequence $\{|A^n|^{1/n}\}_{n \in \mathbb{N}} = \{((A^*)^n A^n)^{1/2n}\}_{n \in \mathbb{N}}$ as the \emph{normalized power sequence} (NPS) of $A$.

Following Dykema, Noles, and Zanin \cite{DNZ17}, an operator $A$ in a $C^*$-algebra is said to possess the \emph{norm convergence property} if its normalized power sequence $\{|A^{n}|^{1/n}\}_{n\in\mathbb{N}}$ converges in the norm topology to a positive operator.

The convergence of normalized power sequences has recently drawn significant interest across various operator-algebraic settings:
\begin{itemize}
    \item Haagerup and Schultz \cite{HS09} initially investigated the strong operator topology (SOT) convergence of the NPS for operators in tracial von Nuemann algebras.
    \item Nayak \cite{Nay23} established that every matrix in $M_k(\mathbb{C})$ possesses the norm convergence property.
    \item Subsequently, Bhat and Bala \cite{BB24} extended the norm convergence property to compact operators acting on separable complex Hilbert spaces, while Nayak and Shekhawat proved norm convergence for spectral operators \cite{NS24} as well as Riesz operators \cite{NS26}.
    \item In the setting of von Neumann algebras, Nayak and Shekhawat \cite{NS25} proved that the NPS of every operator in a type $\text{I}$ Murray--von Neumann algebra converges in the measure topology(cf.\ \cite{Nel74}).
\end{itemize}

Despite these developments, whether full norm convergence holds for general non-compact operators in finite type $\text{I}$ von Neumann algebras remained open. In \cite[Theorem~4.3]{DNZ17}, Dykema, Noles, and Zanin showed that in a finite tracial von Neumann algebra, operator \emph{decomposability} implies the norm convergence property.

The primary objective of this article is to settle this problem for type $\text{I}_k$ von Neumann algebras by establishing the decomposability of all operators in this algebra. The main result of this paper is as follows:

\begin{thm}\label{main theorem}
 Let $\mathcal{M}$ be a type $\mathrm{I}_k$ ($k \in \mathbb{N}$) von Neumann algebra acting on a separable Hilbert space $\mathcal{H}$, and $A \in \mathcal{M}$. The operator $A$ is decomposable and, consequently, possesses the norm convergence property.
\end{thm}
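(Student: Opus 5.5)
Since $\mathcal{H}$ is separable, I would begin with the structure theory of type $\mathrm{I}_k$ algebras: $\mathcal{M}\cong M_k(L^\infty(X,\mu))$ for a standard probability space $(X,\mu)$, up to spatial isomorphism and multiplicity, which does not affect decomposability in the sense of Dykema--Noles--Zanin (this notion refers only to the algebra and its trace). So $A$ is a measurable bounded function $x\mapsto A(x)\in M_k(\mathbb{C})$. The trace is $\tau(T)=\int_X \frac1k\operatorname{tr}(T(x))\,d\mu(x)$. The second conclusion, the norm convergence property, will then follow from decomposability by \cite[Theorem~4.3]{DNZ17}. The main task is to verify decomposability, which I take in the DNZ sense: there is a family of $A$-invariant projections $P(B)\in\mathcal{M}$ indexed by Borel sets $B\subseteq\mathbb{C}$, such that $A P(B)$ has Brown measure concentrated in $B$ and $(1-P(B))A$ has Brown measure concentrated in $B^c$.

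The first step is a \emph{measurable Schur decomposition}. Using a measurable selection theorem (Kuratowski--Ryll-Nardzewski applied to the closed set of pairs $(U,T)$ with $U$ unitary, $T$ upper triangular, $A(x)=UTU^*$), I would choose a measurable unitary $U(x)$ with $U(x)^*A(x)U(x)=T(x)$ upper triangular. I would also order the diagonal entries $\lambda_1(x),\dots,\lambda_k(x)$ measurably, for instance lexicographically by real and then imaginary part. Then $U\in\mathcal{M}$ is unitary, and it suffices to treat the upper-triangular $T$. The invariant subspaces of $T(x)$ are spanned by the first $j$ basis vectors $e_1,\dots,e_j$.

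The second step is to build the projection family. For a Borel set $B$ I need at each $x$ a $T(x)$-invariant subspace whose restriction has spectrum exactly $\{\lambda_i(x)\}\cap B$, counted with multiplicity. A single flag does not suffice, because the eigenvalues lying in $B$ need not come first. Instead I would take, pointwise, the spectral subspace $\bigoplus_{\lambda\in B}\ker(A(x)-\lambda)^k$, i.e.\ the range of the Riesz idempotent summed over eigenvalues in $B$, and let $P_B(x)$ be the orthogonal projection onto it. Measurability comes from describing it as $\ker\prod_{\lambda_i\in B}(A(x)-\lambda_i(x))^{k}$, the kernel of a measurable matrix function, whose range projection is measurable. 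This subspace is invariant, and the restriction $A(x)|_{P_B}$ has eigenvalues $\{\lambda_i(x)\in B\}$. On the complement, the compression $(1-P_B)A(1-P_B)$ has the remaining eigenvalues. The Brown measure of a matrix-valued function in $M_k(L^\infty)$ is $\int_X \frac1k\sum_i\delta_{\lambda_i(x)}\,d\mu$ (suitably normalized on corners). This follows from the Fuglede--Kadison determinant formula $\log\Delta(T-\lambda)=\int\frac1k\log|\det(T(x)-\lambda)|\,d\mu$. Hence the Brown measures of the two pieces are supported in $\overline{B}$ and $\overline{B^c}$ respectively, and are in fact concentrated on $B$ and $B^c$.

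The expected obstacle is matching the precise decomposability definition in \cite{DNZ17}: whether it requires lattice or monotonicity properties of $B\mapsto P(B)$, and whether it requires concentration on $B$ as opposed to $\overline{B}$. I would check monotonicity directly, since the generalized eigenspace sum is increasing in $B$. The measurability of $P_B$ is the technical heart. If the kernel description causes trouble where eigenvalue multiplicities jump, I would fall back on partitioning $X$ into countably many measurable pieces on which the multiplicity pattern is constant, and define $P_B$ piecewise via the continuous Schur form on each piece.
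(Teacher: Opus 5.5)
There is a genuine gap: the property you set out to verify is not decomposability. Take $A$-invariant projections $P(A,B)$ such that the Brown measure of $AP(A,B)$ is concentrated on $B$ and that of $(1-P(A,B))A$ is concentrated on $\mathbb{C}\setminus B$. Their existence is exactly the Haagerup--Schultz theorem (Theorem~\ref{HS main thm}). It holds for every operator in every tracial von Neumann algebra, so it carries no information here.

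Decomposability is a statement about \emph{spectra}. By Proposition~\ref{prop:disks}, you must show for every open disk $D$ that $\sigma(AP(A,\overline D))\subseteq\overline D$ and $\sigma((1-P(A,\overline D))A)\subseteq\mathbb{C}\setminus D$ in the respective corners. You also need the two analogous inclusions for $P(A,\mathbb{C}\setminus D)$.

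The support of the Brown measure can be far smaller than the spectrum. Let $T=\bigoplus_{n\ge1}J_n$, the direct sum of nilpotent Jordan blocks of size $n$, in $\bigoplus_n M_n(\mathbb{C})$ with a weighted faithful trace. Its Brown measure is $\delta_0$, so it passes your test. Yet $\|(\lambda-J_n)^{-1}\|\ge|\lambda|^{-n}$ forces $\sigma(T)=\overline{\mathbb{D}}$. Moreover, $|T^n|^{1/n}$ is a nonzero projection converging strongly to $0$, so $T$ lacks the norm convergence property.

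Your argument therefore stops exactly where the real work begins. Pointwise invertibility of $A(x)-\lambda$ on $\operatorname{ran}P_B(x)$ gives invertibility in $P_B\mathcal{M}P_B$ only if the inverses are essentially uniformly bounded. That bound is where $k<\infty$ must enter, and nothing in your proposal uses it.

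Your construction can be completed as follows.
\begin{enumerate}
\item Identify $P_B$ with the Haagerup--Schultz projection. Maximality gives $P_B\le P(A,B)$, and $\tau(P_B)=\nu_A(B)=\tau(P(A,B))$ with $\tau$ faithful, so $P_B=P(A,B)$.
\item Let $\lambda\notin\overline B$ and $d=\operatorname{dist}(\lambda,\overline B)$. The matrix $M(x)=(A(x)-\lambda)|_{\operatorname{ran}P_B(x)}$ has size $r(x)\le k$ and $|\det M(x)|\ge d^{r(x)}$. Hence $\|M(x)^{-1}\|\le\|M(x)\|^{r(x)-1}/|\det M(x)|\le\max_{1\le r\le k}(\|A\|+|\lambda|)^{r-1}d^{-r}$, uniformly in $x$.
\item The same estimate applies to the compression to $\operatorname{ran}(1-P_B(x))$, whose eigenvalues are the $\lambda_i(x)\notin B$. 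This handles $\lambda\notin\overline{\mathbb{C}\setminus B}$.
\item This gives $\sigma(AP(A,B))\subseteq\overline B$ and $\sigma((1-P(A,B))A)\subseteq\overline{\mathbb{C}\setminus B}$. Taking $B=\overline D$ and $B=\mathbb{C}\setminus D$ yields all four inclusions.
\end{enumerate}

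The paper reaches the same inclusions by a different route. It treats only disks centred at $0$, translating with Lemma~\ref{lem:basicHS}. It uses a triangular form whose diagonal is sorted by modulus, so that a flag projection $Q(A,s)$ carries exactly the eigenvalues in $s\overline{\mathbb{D}}$. It inverts the triangular blocks with Lemma~\ref{lemma:inverse}. Finally, it sandwiches $Q(A,s)\le P(A,\overline{s\mathbb{D}})\le Q(A,s+\varepsilon)$ instead of identifying the projections.

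Your completed route handles arbitrary Borel $B$ at once, but it needs the identification $P_B=P(A,B)$ and the determinant bound. The paper's route needs the sorted triangular form over the Stonean space.
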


To prove Theorem \ref{main theorem}, we exploit the canonical structure of type $\text{I}_k$ von Neumann algebras as $M_k(C(S))$ over a hyper-Stonean space $S$. By refining continuous upper-triangular representations via unitary conjugations, we construct pointwise-ordered diagonal forms. This enables the construction of explicit families of invariant projections that closely mirror the Haagerup--Schultz projections. By verifying the spectral inclusion conditions for restrictions to these invariant subspaces, we deduce decomposability via equivalent spectral criteria.

Additionally, we present two structural non-examples:
\begin{enumerate}
    \item Using the Dunford decomposition, we construct an operator in a type $\text{I}_3$ von Neumann algebra that fails to be a spectral operator, demonstrating that our main result strictly encompasses non-spectral operators.
    \item We demonstrate that this convergence property fails when $k=\infty$ by providing a counterexample in the type $\text{I}_\infty$ factor $\mathcal{B}(\ell^2(\mathbb{N}))$.
\end{enumerate}

\noindent\textbf{Organization of the Paper.} The rest of this article is organized as follows. In Section~2, we provide necessary background on Brown measures, Haagerup--Schultz projections, continuous upper-triangular forms, and decomposability. Section~3 contains the construction of the key invariant projections and the proof of Theorem \ref{main theorem}. Section~4 details the construction of a non-spectral operator in a type $\text{I}_3$ von Neumann algebra and an operator in a type $\text{I}_\infty$ factor that does not possess norm convergence property.

\section{Preliminaries and notations}
\label{sec:background}
Throughout the article, the following notation and language will be used: $\Mcal$ will be a von Neumann algebra of operators on a complex Hilbert space $\mathcal{H}$. Unless otherwise specified, $A$ will be an element of $\Mcal$ and
then $\sigma(A)$ will denote the spectrum of $A$.
Finally, we use the standard notations:
$\Cpx$ is the complex plane, $\mathbb{D}$ is the open unit disk in $\Cpx$ centered at the origin, and $\mathbb{T}$ is the unit circle, namely,
the boundary of $\mathbb{D}$.

We denote the set of all $k \times k$ matrices over an algebra $\mathcal{A}$ by $M_k(\mathcal{A})$ and the identity matrix by $I_k$, or just $I$ if the dimension of the matrix is clear. For $k \in \mathbb{N}$, $[k]$ denotes the set $\{1, \ldots, k\}$. For $i, j \in [k]$, the elementary matrix in $M_k(\mathcal{A})$ with $(i,j)^{\text{th}}$ entry equal to the unity of $\mathcal{A}$ and rest of the entries equal to the zero element of $\mathcal{A}$, is denoted by $E_{ij}$.~We note the following basic fact about the multiplication of elementary matrices:
\begin{equation*}
E_{ij} E_{k\ell} = \delta_{jk} E_{i\ell} \quad \text{for } i, j, k, \ell \in [n].
\end{equation*}

Let $(X, \mu)$ be a measure space, and $A = (A_{ij}) \in M_k(L^\infty(X, \mu))$. For any $1 \le l \le k$, we denote:
\begin{itemize}
    \item $A^{(l)} = (A_{ij})_{1 \le i,j \le l} \in M_l(L^\infty(X, \mu))$ as the leading principal submatrix formed by the first $l$ rows and first $l$ columns of $A$;
    \item $A_{(l)} = (A_{ij})_{k-l+1 \le i,j \le k} \in M_l(L^\infty(X, \mu))$ as the trailing principal submatrix formed by the last $l$ rows and last $l$ columns of $A$;
    \item $A|_Y = (A_{ij}|_Y)_{1 \le i,j \le k} \in M_k(L^\infty(Y, \mu|_Y))$ as the restriction of $A$ to a measurable subset $Y \subseteq X$.
\end{itemize}

\subsection{Brown measure and Haagerup-Schultz projections} 
  In this subsection we consider $\M$ is a finite von Neumann algebra acting on the Hilbert space $\Hsp$ and with faithful, tracial state $\tau$. 
  
  In \cite{Br86}, L.~Brown introduced a generalization of the spectral distribution measure for not necessarily normal operators in tracial von Neumann algebras.
\begin{thm}\label{thm:brnmeas}
Let $T\in \mathcal{M}$.
Then there exists a unique probability measure $\nu_T$ such that for every $\lambda\in\Cpx$, 
$$\int_{[0,\infty)}\log(x)\,d\mu_{|T-\lambda|}(x)=\int_\Cpx \log|z-\lambda|\,d\nu_T(z),$$
where for a positive operator $S$, $\mu_S$ denotes the spectral distribution measure $\tau \circ E$, where $E$ is the spectral measure of $S$.
\end{thm}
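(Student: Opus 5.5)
The plan is to follow Brown's original construction: define the candidate measure as a distributional Laplacian and check that it is positive, of total mass one, and reproduces the stated identity. For $T\in\M$ and $\lambda\in\Cpx$, set
$$L(\lambda)=\int_{[0,\infty)}\log(x)\,d\mu_{|T-\lambda|}(x)=\log\Delta(T-\lambda),$$
where $\Delta$ is the Fuglede--Kadison determinant. This takes values in $[-\infty,\infty)$. I will then show that $L$ is subharmonic on $\Cpx$ and define $\nu_T=\frac{1}{2\pi}\nabla^2 L$ in the sense of distributions. Positivity of $\nu_T$ follows from subharmonicity by the Riesz representation of positive distributions.

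The key analytic step is the subharmonicity. For $\varepsilon>0$ I regularize:
$$L_\varepsilon(\lambda)=\tfrac12\tau\bigl(\log((T-\lambda)^*(T-\lambda)+\varepsilon^2)\bigr).$$
This is a smooth function of $\lambda$. A direct computation using the trace property and the resolvent identity gives
$$\partial_{\bar\lambda}\partial_\lambda L_\varepsilon = \tfrac{\varepsilon^2}{2}\,\tau\bigl((S_\lambda^*S_\lambda+\varepsilon^2)^{-1}(S_\lambda S_\lambda^*+\varepsilon^2)^{-1}\bigr)\ge 0,$$
with $S_\lambda=T-\lambda$. Hence $L_\varepsilon$ is subharmonic. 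Since $L_\varepsilon\downarrow L$ monotonically as $\varepsilon\downarrow 0$, by monotone convergence on the spectral measures, $L$ is subharmonic as a decreasing limit of subharmonic functions. It is not identically $-\infty$, because $L(\lambda)=\log|\lambda|+o(1)$ for $|\lambda|>\|T\|$. Verifying this second-derivative formula, and in particular handling the noncommutativity when differentiating $\tau\log(\cdot)$, is the main obstacle. I would use the derivative formula $\partial\,\tau(f(X))=\tau(f'(X)\partial X)$, which is valid under the trace.

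Next, the mass and the support. For $|\lambda|>\|T\|$ we have $T-\lambda=-\lambda(1-\lambda^{-1}T)$, so
$$L(\lambda)=\log|\lambda|+\mathrm{Re}\,\tau\log(1-\lambda^{-1}T),$$
with the logarithm given by a power series. This expression is harmonic there. Consequently $\nu_T$ is supported in the closed disc of radius $\|T\|$, and its total mass equals the coefficient of $\log|\lambda|$, which is $1$. So $\nu_T$ is a compactly supported probability measure.

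Finally, the identity and uniqueness. The function $\int\log|z-\lambda|\,d\nu_T(z)$ has the same Laplacian as $L$, so the two differ by a harmonic function on $\Cpx$. That difference is $o(1)$ at infinity, so by Liouville's theorem it is zero. For uniqueness, two probability measures with equal logarithmic potentials everywhere have equal distributional Laplacians, hence coincide.
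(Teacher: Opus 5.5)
The paper gives no proof of this theorem; it quotes it from Brown's paper. Your argument is the standard construction behind that citation: subharmonicity of $\lambda\mapsto\log\Delta(T-\lambda)$ via the $\varepsilon$-regularization (your formula $\partial_{\bar\lambda}\partial_\lambda L_\varepsilon=\frac{\varepsilon^2}{2}\tau\bigl((S_\lambda^*S_\lambda+\varepsilon^2)^{-1}(S_\lambda S_\lambda^*+\varepsilon^2)^{-1}\bigr)$ checks out), then $\nu_T=\frac{1}{2\pi}\nabla^2 L$, the mass computation outside the disc, and Liouville. So the proof is correct, with one step worth making explicit: equal distributional Laplacians only give $L(\lambda)=\int\log|z-\lambda|\,d\nu_T(z)$ almost everywhere a priori, and because both sides are subharmonic (each is the limit of its area means over shrinking discs), they agree at every $\lambda$, including points where both equal $-\infty$.
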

The measure $\nu_T$ in Theorem \ref{thm:brnmeas} is called the {\em Brown measure} of $T$.
If $T$ is normal, then $\nu_T$ equals the spectral distribution of $T$.
The following is the main result from \cite{HS09}. It provides projections that split the operator $T$ according to the Brown measure.
\begin{thm}\cite[Theorem 1.1]{HS09}\label{HS main thm}
    For every $T \in \M$ and every Borel set $B \subseteq \C$ there is a largest closed, $T$-invariant subspace, $\K = \K_T(B)$, affiliated with $\M$, such that the Brown measure of $T|_\K$, $\mu_{T|_\K}$, is concentrated on $B$. Moreover, $\K$ is hyperinvariant for $T$, and if $P = P_T(B) \in \M$ denotes the projection onto $\K$, then
    \begin{enumerate}[label=(\roman*),leftmargin=40pt]
    \item $\tau(P) = \mu_T(B)$,
    \item the Brown measure of $P^\perp T P^\perp$, considered as an element of $P^\perp \M P^\perp$, is concentrated on $\C \setminus B$.
    \end{enumerate}
\end{thm}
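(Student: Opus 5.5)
The statement is Haagerup and Schultz's Theorem 1.1 from \cite{HS09}, which the paper quotes rather than proves. I would reconstruct their argument by building $P_T(B)$ for disks first and then extending to all Borel sets.

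\textbf{Step 1: closed disks centred at $0$.} For $B=\overline{B(0,r)}$ I would define $\K_T(B)$ as the set of vectors $\xi$ admitting vectors $\xi_n\to\xi$ with
\[
\limsup_n \|T^n\xi_n\|^{1/n}\le r.
\]
I would show this set is a closed subspace that is invariant under every operator commuting with $T$, hence hyperinvariant and affiliated with $\M$. Its projection $P$ should be recoverable from the spectral projections $1_{[0,r]}\bigl(|T^n|^{1/n}\bigr)$, as a limit along $n$ in a suitable sense.

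\textbf{Step 2: the trace identity for disks.} I would prove $\tau(P)=\mu_T(\overline{B(0,r)})$. The tool is the Fuglede--Kadison determinant $\Delta$, which enters through Theorem \ref{thm:brnmeas}. One needs $\Delta(T^n)=\Delta(T)^n$ together with the inequality $\Delta(PTP)\le r^{\tau(P)}$ for the compression to $P$. Set $\mu_1$ for the Brown measure of $T|_{\K}$ computed in $P\M P$, and $\mu_2$ for that of $P^\perp TP^\perp$ computed in $P^\perp\M P^\perp$. The key identity is the splitting
\[
\mu_T=\tau(P)\,\mu_1+\tau(P^\perp)\,\mu_2,
\]
which comes from upper-triangularity of $T-\lambda$ with respect to $P\oplus P^\perp$ and multiplicativity of $\Delta$ on triangular operators. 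Combining this with extremality arguments should force $\mu_1$ to live on the disk and $\mu_2$ on its complement.

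\textbf{Step 3: general centres, exteriors, and all Borel sets.} Translating gives closed disks with arbitrary centre. For complements of disks, and for half-planes, I would apply the construction to $(T-\lambda)^{-1}$ where it exists, and otherwise pass through Möbius transforms of $T$ and a limiting argument. Intersections of sets correspond to intersections of the subspaces, and monotone limits of sets correspond to monotone limits of projections.

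A monotone class argument should then produce $P_T(B)$ for every Borel $B$. The maximality of $\K_T(B)$ and properties (i) and (ii) pass to these limits because both sides of the trace identity behave well under increasing and decreasing limits.

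\textbf{Main obstacle.} The hardest step is Step 2: showing that the projection defined through growth rates of $\|T^n\xi\|$ has trace exactly $\mu_T$ of the disk. This requires a delicate comparison of $\Delta\bigl(|T^n|^{1/n}\bigr)$ with the restricted operators. I would attack it first through the asymptotic distribution of $|T^n|^{1/n}$, using the Haagerup--Schultz spectral radius type estimates.
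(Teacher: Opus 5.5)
The paper does not prove this theorem; it quotes it from \cite{HS09}, so the relevant comparison is with Haagerup--Schultz. Your architecture is compatible with theirs: $\K_T(\overline{B(0,r)})=E(T,r)$, the splitting $\mu_T=\tau(P)\mu_1+\tau(P^\perp)\mu_2$ along a $T$-invariant $P$, then intersections and monotone limits. But Step~2, where essentially all of the content lies, is only a placeholder, and the tools you list cannot fill it. Since $\log\Delta(S)=\int\log|z|\,d\mu_S(z)$, the identity $\Delta(T^n)=\Delta(T)^n$ and a bound on the determinant of $TP$ in $P\M P$ each constrain a single number, not where $\mu_1$ and $\mu_2$ live. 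The splitting formula holds for every invariant projection, so it only redistributes mass. You need two analytic facts linking the growth-rate definition of $E(T,r)$ to $\mu_T$:
\begin{itemize}
\item[(a)] A Weyl-type inequality, e.g.\ $\int\max(\log|z|,\log r)\,d\mu_S(z)\le\tau\bigl(\max(\log|S^n|^{1/n},\log r)\bigr)$, obtained from $\mu_{S^n}=(z^n)_*\mu_S$ and Jensen's formula. This underlies $\mu_1(\overline{B(0,r)})=1$ and hence $\tau(P)\le\mu_T(\overline{B(0,r)})$.
\item[(b)] The reverse, Yamamoto-type bound $\tau(P)\ge\mu_T(\overline{B(0,r)})$. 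This is what gives maximality and $\mu_2(\overline{B(0,r)})=0$.
\end{itemize}
Nothing in your sketch addresses (b), and it cannot come from formal determinant identities. Combined with (a), it already yields a nontrivial hyperinvariant subspace for every $T$ whose Brown measure is not a point mass: choose a disk with $0<\mu_T(\overline{B(\lambda,r)})<1$. In other words, (b) \emph{is} the Haagerup--Schultz invariant subspace theorem, and it is the long technical core of \cite{HS09}.

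Your suggested remedy, going through the asymptotic distribution of $|T^n|^{1/n}$, is circular as stated. In \cite{HS09}, the SOT convergence of $|T^n|^{1/n}$ and the identification of the limit's spectral projections with $P_T(\overline{B(0,r)})$ are deduced from the construction of these projections. So you would need an independent proof, which is exactly the missing estimate. Step~3 also has a concrete flaw. Passing to $(T-\lambda)^{-1}$, or to a M\"obius transform $z\mapsto 1/(z-\lambda')$, turns $\Cpx\setminus B(\lambda,r)$ into a bounded disk only when $\lambda'\in B(\lambda,r)\setminus\sigma(T)$. If $B(\lambda,r)\subseteq\sigma(T)$ (say $\sigma(T)=\overline{\mathbb{D}}$ and the disk is $\tfrac12\mathbb{D}$), no admissible transform exists, and the unspecified ``limiting argument'' has nothing to start from. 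Haagerup--Schultz avoid invertibility by handling exteriors directly with the backward-growth subspaces $F(T-\lambda,r)$ (Proposition~\ref{HS E and F result}(ii)). These are tied to the disk case by the orthogonality $E(T,r)\perp F(T^*,s)$ for $s>r$ (Lemma~\ref{HS E and F relation}). Once disks and exteriors are in hand, your passage to all Borel sets via intersections and monotone limits is workable. The splitting formula controls both Brown measures and traces under increasing and decreasing limits of invariant projections.
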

Next, we are going to recall certain hyperinvariant subspaces for $T$.
\begin{defi}\cite[Definition 3.1]{HS09}\label{HS E and F definition}
\begin{enumerate}[label=(\roman*),leftmargin=40pt]
    \item For $T \in \M$ and $r > 0$, let $E(T, r)$ denote the set of $\xi \in \Hsp$, for which there exists a sequence $(\xi_n)_{n=1}^\infty$ in $\Hsp$, such that
    \begin{equation}
        \lim_{n \to \infty} \|\xi_n - \xi\| = 0 \quad \text{and} \quad \limsup_{n \to \infty} \|T^n \xi_n\|^{\frac{1}{n}} \le r.
    \end{equation}
    \item For $T \in \M$ and $r > 0$, let $F(T, r)$ denote the set of $\eta \in \Hsp$, for which there exists a sequence $(\eta_n)_{n=1}^\infty$ in $\Hsp$, such that
    \begin{equation}
        \lim_{n \to \infty} \|T^n \eta_n - \eta\| = 0 \quad \text{and} \quad \limsup_{n \to \infty} \|\eta_n\|^{\frac{1}{n}} \le \frac{1}{r}.
    \end{equation}
\end{enumerate}
\end{defi}
\begin{lemma}\cite[Lemma 3.6]{HS09}\label{HS E and F relation}
    For $T \in \M$ and $s > r > 0$ one has that $E(T, r) \perp F(T^*, s)$.
\end{lemma}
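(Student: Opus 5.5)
The plan is to compute $\langle \xi,\eta\rangle$ as a limit of inner products that can be moved across $T^n$. The growth bounds in Definition~\ref{HS E and F definition} then force these inner products to decay geometrically. Nothing beyond the definitions and Cauchy--Schwarz is needed; traciality of $\M$ plays no role.

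Fix $\xi\in E(T,r)$ and $\eta\in F(T^*,s)$ with $s>r>0$.

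\textbf{Step 1 (choose sequences).} By Definition~\ref{HS E and F definition}(i), choose $(\xi_n)$ with $\xi_n\to\xi$ and $\limsup_n\|T^n\xi_n\|^{1/n}\le r$. By part (ii) applied to $T^*$, choose $(\eta_n)$ with $(T^*)^n\eta_n\to\eta$ and $\limsup_n\|\eta_n\|^{1/n}\le 1/s$.

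\textbf{Step 2 (pass to the limit).} Since $\xi_n\to\xi$ and $(T^*)^n\eta_n\to\eta$, both sequences are bounded. Hence joint continuity of the inner product gives
\[
\langle \xi,\eta\rangle=\lim_{n\to\infty}\langle \xi_n,(T^*)^n\eta_n\rangle=\lim_{n\to\infty}\langle T^n\xi_n,\eta_n\rangle .
\]

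\textbf{Step 3 (geometric decay).} Because $r/s<1$, pick $\varepsilon>0$ small enough that $q:=(r+\varepsilon)(1/s+\varepsilon)<1$. For all sufficiently large $n$ we have $\|T^n\xi_n\|\le (r+\varepsilon)^n$ and $\|\eta_n\|\le(1/s+\varepsilon)^n$. By Cauchy--Schwarz,
\[
|\langle T^n\xi_n,\eta_n\rangle|\le q^n\to 0 .
\]
Therefore $\langle\xi,\eta\rangle=0$. Since $\xi$ and $\eta$ were arbitrary, $E(T,r)\perp F(T^*,s)$.

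The only step needing care is Step~2, which requires justifying the interchange of limit and inner product. This follows from the estimate
\[
|\langle\xi_n,\zeta_n\rangle-\langle\xi,\eta\rangle|\le\|\xi_n-\xi\|\,\|\zeta_n\|+\|\xi\|\,\|\zeta_n-\eta\|,
\]
where $\zeta_n=(T^*)^n\eta_n$. The sequence $\|\zeta_n\|$ is bounded because $\zeta_n$ converges.
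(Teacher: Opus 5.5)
Your proof is correct and is essentially the standard Haagerup--Schultz argument. The paper only quotes this lemma and does not reprove it. That argument writes $\langle\xi,\eta\rangle=\lim_n\langle T^n\xi_n,\eta_n\rangle$ and uses $\limsup_n\bigl(\|T^n\xi_n\|\,\|\eta_n\|\bigr)^{1/n}\le r/s<1$ to force the limit to vanish, which is exactly what you do.
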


\begin{prop}\cite[Corollary 7.19]{HS09}\label{HS E and F result}
    For every $T \in \M$, every $\lambda \in \C$ and every $r > 0$ one has:
\begin{enumerate}[label=(\roman*),leftmargin=40pt]
    \item $E := E(T - \lambda \mathbf{1}, r)$ is the largest, closed $T$-invariant subspace affiliated with $\M$, such that $\operatorname{supp}(\mu_{T|_E}) \subseteq \overline{B(\lambda, r)}$.
    \item $F := F(T - \lambda \mathbf{1}, r)$ is the largest, closed $T$-invariant subspace affiliated with $\M$, such that $\operatorname{supp}(\mu_{T|_F}) \subseteq \C \setminus B(\lambda, r)$.
\end{enumerate}
Therefore, $E(T - \lambda \mathbf{1}, r)=\K_T(\overline{B(\lambda, r)})$ and $F(T - \lambda \mathbf{1}, r)=\K_T(\C \setminus B(\lambda, r))$.
\end{prop}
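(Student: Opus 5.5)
The substantive content of (i) and (ii) is \cite[Corollary 7.19]{HS09}; what has to be checked here is the identification with the projections of Theorem \ref{HS main thm}. My plan is to get that identification from a uniqueness-of-the-largest-element argument, and then to sketch how (i) and (ii) themselves would be proved.

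\emph{The final sentence, assuming (i) and (ii).} Both $\overline{B(\lambda,r)}$ and $\C\setminus B(\lambda,r)$ are closed sets. For a closed set $B$, a Borel probability measure $\mu$ is concentrated on $B$ if and only if $\operatorname{supp}(\mu)\subseteq B$. One direction holds because $\mu(\C\setminus\operatorname{supp}\mu)=0$. For the other, if $\mu(B)=1$ then $\C\setminus B$ is an open null set, so it misses the support.

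Hence the class of closed $T$-invariant subspaces affiliated with $\M$ whose restriction has Brown measure concentrated on $B$ is exactly the class appearing in (i), respectively (ii). A class of subspaces has at most one largest element. Theorem \ref{HS main thm} says $\K_T(B)$ is the largest element of this class, and (i), respectively (ii), says $E$, respectively $F$, is the largest element of the same class. Therefore $E(T-\lambda\mathbf 1,r)=\K_T(\overline{B(\lambda,r)})$ and $F(T-\lambda\mathbf 1,r)=\K_T(\C\setminus B(\lambda,r))$.

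\emph{Sketch of (i).} Replacing $T$ by $T-\lambda\mathbf 1$ translates Brown measures, so I may take $\lambda=0$. The steps, in order, are as follows.
\begin{enumerate}
\item Show $E(T,r)$ is a closed subspace. Closedness follows by a diagonal argument on approximating sequences.
\item Show $E(T,r)$ is invariant under every operator commuting with $T$. This gives $T$-invariance, invariance under $\M'$, and hence affiliation with $\M$.
\item Show the Brown measure of $S=T|_{E}$ lies in $\overline{B(0,r)}$. The route is that $\tau\bigl(1_{(s,\infty)}(|S^n|^{1/n})\bigr)\to0$ for every $s>r$, combined with the Fuglede--Kadison determinant formula for the Brown measure mass outside discs.
\item Prove maximality. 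If $\K$ is invariant and $\operatorname{supp}\mu_{T|_\K}\subseteq\overline{B(0,r)}$, then on $\K$ the operators $|(T|_\K)^n|^{1/n}$ concentrate below any $s>r$ in trace. This produces approximating vectors $\xi_n\to\xi$ with $\limsup\|T^n\xi_n\|^{1/n}\le s$, for each $s>r$. Letting $s\downarrow r$ needs a further diagonal argument and uses Lemma \ref{HS E and F relation}.
\end{enumerate}

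\emph{Sketch of (ii).} The analogous argument works with $F(T,r)$, using the duality of Lemma \ref{HS E and F relation}. Since $E(T,r)\perp F(T^*,s)$ for $s>r$, the complement of $E$ relates to $F$ for $T^*$. That reduces the support statement for $F$ to the one already proved for $E$, together with Theorem \ref{HS main thm}(ii).

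\emph{Main obstacle.} The hard step is the link in (i) between the asymptotic growth condition defining $E(T,r)$ and the support of the Brown measure. It requires the Haagerup--Schultz analysis of the limit behaviour of $|T^n|^{1/n}$ in tracial von Neumann algebras. I would import this from \cite{HS09} rather than reprove it. The identification with $\K_T$, which is what is used later in the paper, is purely formal, as shown above.
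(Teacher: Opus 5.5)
Your proposal is correct and agrees with the paper. The paper gives no argument of its own for (i) and (ii); it imports them from \cite[Corollary 7.19]{HS09}, as you do for the substantive content, and your uniqueness-of-the-largest-element argument correctly justifies the final identification. Since $\overline{B(\lambda,r)}$ and $\C\setminus B(\lambda,r)$ are closed, ``concentrated on $B$'' and ``$\operatorname{supp}\subseteq B$'' define the same class of invariant subspaces, so Theorem \ref{HS main thm} forces $E(T-\lambda\mathbf 1,r)=\K_T(\overline{B(\lambda,r)})$ and $F(T-\lambda\mathbf 1,r)=\K_T(\C\setminus B(\lambda,r))$.
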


The results about Brown measure and Haagerup-Schultz projections in the following lemma are basic and easy to prove.
\begin{lemma}\label{lem:basicHS}
Let $A\in\Mcal$.
Then for any $\lambda\in\mathbb{C}$ and any Borel set $B\subseteq\mathbb{C}$, letting $B^*$ denote the image of $B$ under complex conjugation, we have 
\begin{enumerate}[label=(\roman*),leftmargin=30pt]
\item $\nu_{(A-\lambda)}(B) = \nu_A(B+\lambda)$\label{i}
\item $\nu_{A^*}(B) = \nu_A(B^*)$\label{ii}
\item $P(A-\lambda,B) = P(A,B+\lambda)$\label{iii}
\item\label{it:PT*} $P(A^*,B) = 1-P(A,\mathbb{C}\setminus B^*)$.\label{iv}
\end{enumerate}
\end{lemma}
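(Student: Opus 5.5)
We derive each item from the defining property of the Brown measure in Theorem \ref{thm:brnmeas} together with its uniqueness. For the Haagerup--Schultz projections we use the characterization in Theorem \ref{HS main thm}: $P(A,B)$ projects onto the largest closed $A$-invariant subspace affiliated with $\Mcal$ on which the restricted Brown measure is concentrated on $B$.

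For \ref{i}, let $\nu'$ be the push-forward of $\nu_A$ under $z\mapsto z-\lambda$. For every $\mu\in\Cpx$,
\[
\int\log|w-\mu|\,d\nu'(w)=\int\log|z-(\lambda+\mu)|\,d\nu_A(z)=\int\log x\,d\mu_{|A-\lambda-\mu|}(x).
\]
The right-hand side is exactly the defining identity for $\nu_{A-\lambda}$ at the point $\mu$. By uniqueness, $\nu_{A-\lambda}=\nu'$, and hence $\nu_{A-\lambda}(B)=\nu_A(B+\lambda)$.

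For \ref{ii}, note that $|A^*-\bar\lambda|=|(A-\lambda)^*|$. In a finite von Neumann algebra, $|T|$ and $|T^*|$ are unitarily equivalent on the supports via the polar decomposition $T=V|T|$, and the kernel projections have equal trace. So $\mu_{|T|}=\mu_{|T^*|}$. The Fuglede--Kadison determinant identity then gives
\[
\int\log|z-\bar\lambda|\,d\nu_{A^*}=\int\log|\bar z-\bar\lambda|\,d\nu_A ,
\]
and uniqueness shows that $\nu_{A^*}$ is the push-forward of $\nu_A$ under conjugation.

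For \ref{iii}, a subspace is $A$-invariant if and only if it is $(A-\lambda)$-invariant, and $(A-\lambda)|_\K=A|_\K-\lambda$. By \ref{i}, applied in the compressed algebra $P_\K\Mcal P_\K$ with its normalized trace, $\nu_{(A-\lambda)|_\K}$ is concentrated on $B$ exactly when $\nu_{A|_\K}$ is concentrated on $B+\lambda$. The largest such subspaces therefore coincide.

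Item \ref{iv} is the main step. Put $P=P(A,\Cpx\setminus B^*)$. Since $PAP=AP$, we get $(1-P)A^*(1-P)=A^*(1-P)$, so $\operatorname{ran}(1-P)$ is $A^*$-invariant. The restriction $A^*|_{\operatorname{ran}(1-P)}$ equals $\big((1-P)A(1-P)\big)^*$ in $(1-P)\Mcal(1-P)$. By Theorem \ref{HS main thm}(ii), the Brown measure of $(1-P)A(1-P)$ is concentrated on $B^*$, and by \ref{ii} the Brown measure of the restriction of $A^*$ is concentrated on $B$. Hence $1-P\le P(A^*,B)$.

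For equality, compare traces. Theorem \ref{HS main thm}(i) and \ref{ii} give
\[
\tau(P(A^*,B))=\nu_{A^*}(B)=\nu_A(B^*)=1-\nu_A(\Cpx\setminus B^*)=1-\tau(P)=\tau(1-P).
\]
Thus $P(A^*,B)-(1-P)$ is a projection of trace zero, and faithfulness of $\tau$ forces $P(A^*,B)=1-P$.

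The only delicate point is in \ref{iv}: one must use that the Brown measure of a compression is computed with the normalized trace on the corner, so that concentration statements transfer correctly between the corner and $\Mcal$.
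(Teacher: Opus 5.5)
Your proof is correct, and it is complete relative to the results the paper quotes. The paper gives no proof of this lemma; it only remarks that the statements are basic and easy to prove. So there is no competing route to compare against, and your write-up supplies the details in the natural way:
\begin{itemize}
\item[(i), (ii)] These follow from the uniqueness in Theorem~\ref{thm:brnmeas}, applied to the push-forwards of $\nu_A$ under $z\mapsto z-\lambda$ and $z\mapsto\bar z$. Both push-forwards are compactly supported probability measures, so the uniqueness statement applies.
\item[(iii)] This holds because the families of admissible subspaces for $(A-\lambda,B)$ and for $(A,B+\lambda)$ coincide.
\item[(iv)] The inequality $1-P\le P(A^*,B)$ comes from Theorem~\ref{HS main thm}(ii) together with item (ii) in the corner and maximality. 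You then upgrade it to equality using the trace identity in Theorem~\ref{HS main thm}(i) and faithfulness of $\tau$. This is the right argument.
\end{itemize}

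Two minor remarks. First, the equality $\mu_{|T|}=\mu_{|T^*|}$ used in (ii) follows more directly from traciality: $\tau\big((T^*T)^n\big)=\tau\big((TT^*)^n\big)$ for all $n\ge 0$, and both distributions are supported in $[0,\|T\|^2]$, so no polar decomposition or finiteness argument is needed. Second, the point you flag as delicate is harmless. Rescaling the trace on a corner only rescales the Brown measure, which does not change whether it is concentrated on a given set. What matters is that item (ii) is applied inside the corner $(1-P)\Mcal(1-P)$, and you do that.
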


The following lemma shows that the spectra of compressions onto invariant subspaces preserve inclusion under sub-projections.

\begin{lemma}\label{p,q spectrum proj}
    Let $p, q$ be invariant projections of $A \in \Mcal$ with $p \le q$. Then $\sigma(Ap) \subseteq \sigma(Aq)$ and $\sigma((1-q)A) \subseteq \sigma((1-p)A)$ where the spectra are computed in the compressions of $\Mcal$ by the projections $p$, $q$, $1-q$ and $1-p$ respectively.
\end{lemma}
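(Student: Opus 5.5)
The plan is to write $A$ in block form with respect to the decomposition $1 = q + (1-q)$, and then to compare the compressions by $p$ and by $q$ via a further decomposition $q = p + (q-p)$.

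\emph{Step 1: block form.} Because $q$ is $A$-invariant we have $(1-q)Aq = 0$. With respect to $1 = q + (1-q)$ this gives
\[
A = \begin{pmatrix} Aq & qA(1-q) \\ 0 & (1-q)A(1-q) \end{pmatrix},
\]
where $Aq = qAq$ and $(1-q)A = (1-q)A(1-q)$. In the same way, since $p \le q$ and $p$ is invariant, the corner $qAq$ in $q\Mcal q$ has the upper triangular form
\[
Aq = \begin{pmatrix} Ap & pA(q-p) \\ 0 & (q-p)A(q-p) \end{pmatrix}
\]
with respect to $q = p + (q-p)$. The two claims are therefore about the diagonal corners of an upper triangular $2\times 2$ operator matrix.

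\emph{Step 2: the first inclusion.} The standard fact for $T = \begin{pmatrix} X & Y \\ 0 & Z\end{pmatrix}$ only gives $\sigma(T) \subseteq \sigma(X)\cup\sigma(Z)$ together with $\sigma(X)\cup\sigma(Z) \subseteq \sigma(T)\cup(\sigma(X)\cap\sigma(Z))$. So $\sigma(X)\subseteq\sigma(T)$ fails in general, as the bilateral shift restricted to $H^2$ shows. A purely formal argument is therefore impossible, and one must use the finiteness of $\Mcal$: in the paper's setting $\Mcal$ carries a faithful tracial state, or is type $\mathrm{I}_k$.

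Suppose $\lambda \notin \sigma(Aq)$, so $T - \lambda q$ is invertible in $q\Mcal q$. Then $(T-\lambda)$ is bounded below on $p\Hsp$, since there it agrees with $X - \lambda$, so $X - \lambda$ is bounded below in $p\Mcal p$. In a finite von Neumann algebra an element that is bounded below has trivial kernel, hence is injective. Being injective, it has dense range, because finiteness makes left-invertibility equivalent to invertibility: left and right support projections are equivalent, and a projection equivalent to $1$ equals $1$. Being bounded below with dense range, $X - \lambda$ is invertible. Hence $\lambda \notin \sigma(Ap)$, which proves $\sigma(Ap)\subseteq\sigma(Aq)$.

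\emph{Step 3: the second inclusion.} Apply the same argument to the lower corner, dually. The projection $1-p$ is invariant for $A^*$, and $1-q \le 1-p$. Applying Step 2 to $A^*$ with the projections $1-q \le 1-p$ gives $\sigma(A^*(1-q)) \subseteq \sigma(A^*(1-p))$. Note that $(A^*(1-q))^* = (1-q)A(1-q)$, and the spectrum of an adjoint is the complex conjugate of the spectrum. Conjugating both sides then yields $\sigma((1-q)A)\subseteq\sigma((1-p)A)$.

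\emph{Main obstacle.} The hard step is the finiteness step in Step 2, namely justifying that one-sided invertibility implies invertibility in the corner $p\Mcal p$. I would handle it via polar decomposition: for $S$ bounded below, $|S|$ is invertible, so the partial isometry $V$ in $S = V|S|$ is an isometry in $p\Mcal p$. Finiteness of $p\Mcal p$ then forces $V$ to be unitary. This is exactly where the hypotheses on $\Mcal$ (finite or type $\mathrm{I}_k$) enter, and I would make that assumption explicit.
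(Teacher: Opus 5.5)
Your proof is correct. Its core is the same as the paper's: invertibility of $(A-\lambda)q$ in $q\Mcal q$, restricted to the invariant subspace $p\Hsp$, gives one-sided invertibility of $(A-\lambda)p$ in $p\Mcal p$. The paper does this algebraically. It compresses $q\tilde{A}q(A-\lambda)q=q$ by $p$ and uses $(A-\lambda)p=p(A-\lambda)p$ to get the left inverse $(p\tilde{A}p)(A-\lambda)p=p$. You express the same fact as $(A-\lambda)p$ being bounded below on $p\Hsp$.

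The real difference is the next step. The paper goes from this left inverse straight to $\lambda\notin\sigma(Ap)$ without comment. You correctly observe that upgrading one-sided to two-sided invertibility needs finiteness of $p\Mcal p$. Your bilateral shift example ($q=1$, $p$ the projection onto $\overline{\mathrm{span}}\{e_n : n\ge 0\}$) shows the lemma actually fails in $\mathcal{B}(\ell^2(\mathbb{Z}))$. So your argument supplies a step the paper leaves implicit. The hypothesis is available in the paper's setting: the lemma sits in the subsection where $\Mcal$ has a faithful tracial state, and it is only applied to type $\mathrm{I}_k$ algebras, which are finite. As a small aside, bounded below gives trivial kernel with no finiteness needed; finiteness is used only to get dense range, exactly as in your polar decomposition remark.

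For the second inclusion the paper just writes ``similarly''. The natural analogue of its computation produces only a right inverse of $(1-q)(A-\lambda)(1-q)$, so it again needs finiteness. Your reduction to the first inclusion handles this cleanly, using $A^*$, the $A^*$-invariant projections $1-q\le 1-p$, and $\sigma(S^*)=\overline{\sigma(S)}$.
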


\begin{proof}
     If $\lambda \notin \sigma(Aq)$, then there exists $ \tilde{A} \in M$ such that
\begin{align*}
    (q \tilde{A} q)(Aq - \lambda q) &= q \\
\implies    q \tilde{A} q (A - \lambda) q &= q \\
\implies    q \tilde{A} (A - \lambda) q &= q \\
\implies    p q \tilde{A} (A - \lambda) q p &= p q p \\
\implies    p \tilde{A} (A - \lambda) p &= p \\
\implies    (p \tilde{A} p) \cdot (A - \lambda) p &= p.
\end{align*}
Then $\lambda \notin \sigma(Ap)$ which implies $\sigma(Ap) \subseteq \sigma(Aq)$. Similarly, we can show that $\sigma((1-q)A) \subseteq \sigma((1-p)A)$.
\end{proof}

\subsection{Upper-triangular forms in type $\mathrm{I}_k$ (for $ k \in \mathbb{N}$) von Neumann algebras}
\label{subsec:UT}

Every type $\mathrm{I}_k$ von Neumann algebra with center $\mathscr{C}$ is of the form $M_n(\mathscr{C})$ (see \cite[Theorem 6.6.5]{KR97}), and $\mathscr{C}$ is $*$-isomorphic to $C(S)$ where $S$ is extremely disconnected compact Hausdorff space (see \cite[Theorem 5.2.1]{KR83}). For convenience, we recall the definition of such underlying topological spaces.

\begin{defi}(Stonean space)
A compact Hausdorff space $S$ is said to be a Stonean space if the closure of each open set in $S$ is open in $S$. Such spaces are also known as extremally disconnected spaces in the literature.

A (hyper)-Stonean space is a special type of Stonean space characterized by having a dense set of supports coming from normal measures.
\end{defi}

We can also represent $\mathscr{C}$ as $L^\infty(X, \mu)$, where $X$ is a compact subset of $\mathcal{R}$ and $\mu$ is a regular Borel probability measure on $X$ (see \cite[Corollary II.2.9]{Dav96}). Thus, every type $\mathrm{I}_k$ von Neumann algebra is of the form $M_k(C(S))$ for some Stonean space (in fact (hyper)-Stonean space) $S$, as well as of the form $M_k(L^\infty(X, \mu))$ for some compact set $X$ and a regular Borel probability measure $\mu$ on $X$. There is a canonical $*$-isomorphism between $L^\infty(X, \mu)$ and $C(S)$ where the points in the (hyper)-Stonean space $S$ correspond to ultrafilters of measurable subset in $X$ modulo $\mu$-null set. We will often use $C(S)$ and $L^{\infty}(X,\mu)$ interchangeably.

Let $X$ be a measurable space. Let $A \in L^\infty(X,\mu; M_k(\mathbb{C}))$. For $x \in X$, let $a_{ij}(x)$ be the $(i,j)^{\text{th}}$ entry of the matrix $A(x) \in M_k(\mathbb{C})$. Clearly $a_{ij} : X \to \mathbb{C}$, defined by $x \mapsto a_{ij}(x)$, belongs to $L^\infty(X; \mathbb{C})$. This gives a natural mapping
\begin{equation*}
    \varphi : L^\infty(X,\mu; M_k(\mathbb{C})) \to M_k(L^\infty(X,\mu; \mathbb{C}))
\end{equation*}
defined by
\begin{equation*}
    A \longmapsto [a_{ij}]_{i,j=1}^k.
\end{equation*}
Note that $\varphi$ is a $*$-isomorphism.~So, $L^\infty(X,\mu; M_k(\mathbb{C})) \cong M_k(L^\infty(X,\mu))$. We will often view these $*$-algebras interchangeably. Similarly, one can view $M_n(C(S))$ and $C(S,M_k(\mathbb{C}))$ $*$-isomorphic in a natural way. 

Now, we will recall some properties of operator algebra on $C(S)$ and $L^\infty(X, \mu)$. Let $f_1, f_2 \in C(S)$, then $f_1 \geqslant f_2$ (in the sense of operators) iff for every $s \in S$, $f_1(s) \geqslant f_2(s)$. Also let $f_1, f_2 \in L^\infty(X, \mu)$, then $f_1 \geqslant f_2$ (in the sense of operators) iff $f_1(x) \geqslant f_2(x)$ almost everywhere $x \in X$ (i.e. $f_1 \geqslant f_2$ a.e. on $X$).

\begin{lemma}\label{inc order in two spaces}
Let $\pi: C(S) \longrightarrow L^\infty(X, \mu)$ be a $*$-isomorphism. Then for any $f_1, f_2 \in C(S)$ such that for every $s \in S$, $|f_1(s)| \leqslant |f_2(s)|$, then $|\pi(f_1)| \leqslant |\pi(f_2)|$ a.e. on $X$.
\end{lemma}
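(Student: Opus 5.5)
The idea is to reduce the claim to the order-preservation of a $*$-isomorphism between commutative C$^*$-algebras, and then use that $\pi$ commutes with the continuous functional calculus.

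\emph{Step 1: positivity is preserved.} Put $g = |f_2|^2 - |f_1|^2 \in C(S)$. By hypothesis $g(s)\ge 0$ for every $s\in S$, so $g\geqslant 0$ as an element of $C(S)$. Being real-valued and nonnegative, $g$ has a continuous square root, $g = h^*h$ with $h = g^{1/2}\in C(S)$. Since $\pi$ is a $*$-homomorphism,
\begin{equation*}
\pi(g) = \pi(h)^*\pi(h) = |\pi(h)|^2 \geqslant 0 \quad \text{a.e. on } X .
\end{equation*}
By multiplicativity, $\pi(|f_i|^2) = \pi(\overline{f_i} f_i) = \overline{\pi(f_i)}\,\pi(f_i) = |\pi(f_i)|^2$. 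Hence
\begin{equation*}
|\pi(f_1)|^2 \leqslant |\pi(f_2)|^2 \quad \text{a.e. on } X .
\end{equation*}

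\emph{Step 2: take square roots.} Choose representatives of $\pi(f_1)$ and $\pi(f_2)$. The inequality above holds pointwise off a $\mu$-null set. Since $t\mapsto \sqrt t$ is monotone on $[0,\infty)$, we get $|\pi(f_1)(x)| \leqslant |\pi(f_2)(x)|$ for almost every $x\in X$.

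Alternatively, one can stay inside the C$^*$-algebra. Here $\pi(|f_i|) = |\pi(f_i)|$, because $*$-homomorphisms commute with the continuous functional calculus ($|\cdot| = \sqrt{(\cdot)^*(\cdot)}$). Applying Step 1 to $|f_2| - |f_1| \geqslant 0$ then gives the claim directly.

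The only point that needs care is identifying the two notions of order. In $C(S)$ the operator order is pointwise order on $S$; in $L^\infty(X,\mu)$ it is order almost everywhere. Both were recalled just before the lemma, so each is simply the C$^*$-algebraic positivity $a = b^*b$. Beyond that, the argument is routine.
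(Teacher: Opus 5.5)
Your proof is correct and follows essentially the same route as the paper. Both arguments use order preservation of the $*$-isomorphism $\pi$ to pass from $|f_1|^2 \le |f_2|^2$ on $S$ to $|\pi(f_1)|^2 \le |\pi(f_2)|^2$ a.e. on $X$, and then take square roots; you additionally justify the order preservation by writing $|f_2|^2-|f_1|^2 = h^*h$ with $h$ continuous, which the paper only asserts.
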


\begin{proof}
Since, the $*$-isomorphism preserves order. Let $f_1, f_2 \in C(S)$ then
\begin{align*}
f_1^* f_1 \leqslant f_2^* f_2 &\iff \pi(f_1^* f_1) \leqslant \pi(f_2^* f_2) {\quad \text{(in the sense of operators)}} \\
\Rightarrow \quad |f_1|^2 \leqslant |f_2|^2 &\iff |\pi(f_1)|^2 \leqslant |\pi(f_2)|^2 \\
\Rightarrow \quad |f_1| \leqslant |f_2| &\iff |\pi(f_1)| \leqslant |\pi(f_2)| 
\end{align*}
Hence, for every $s \in S$, $|f_1(s)| \leqslant |f_2(s)|$ iff $|\pi(f_1)| \leqslant |\pi(f_2)|$ a.e. on $X$. 
\end{proof}

In \cite[Theorem 2]{DP63}, Deckard and Pearcy proved that for a Stonean space $S$, every matrix in $M_n(C(S))$ may be transformed into upper-triangular form via unitary conjugation. Since (hyper)-Stonean spaces are  Stonean spaces, this result is true for $M_n(C(S))$, where $S$ is a (hyper)-Stonean space. We now show that the upper triangular form can be chosen such that its diagonal entries are ordered pointwise by magnitude.

\begin{prop}\label{Unitarization 1}
   Let $S$ be a Stonean space, $k \in \mathbb{N}$ and $f_1, \dots, f_k \in C(S)$. Then there exist $ g_1, \dots, g_k \in C(S)$ such that for every $s \in S$,
$$|g_1(s)| \le  |g_2(s)| \le \dots \le |g_k(s)|$$
and $g_1(s), g_2(s), \dots, g_k(s)$ is a re-ordering of $f_1(s), \dots, f_k(s)$.
\end{prop}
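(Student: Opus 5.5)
The plan is to build the $g_i$ by sorting, and to get continuity from the extremal disconnectedness of $S$. A purely pointwise sorting does not obviously give continuous functions, so I will use clopen sets instead. The first observation is that for a Stonean space, the closure of any open set is clopen. Given finitely many open sets, I will form a partition of $S$ into finitely many clopen sets together with a nowhere dense remainder. A continuous function defined on a dense open set extends uniquely, because $S$ is Stonean (it is projective, so bounded continuous functions on dense open sets extend; equivalently $C(S)$ is monotone complete). However, the extension must still equal a reordering of the $f_i(s)$ at every point, so I will aim for a cleaner construction.

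The construction goes as follows. For each permutation $\pi$ of $[k]$, let
$$U_\pi=\{s: |f_{\pi(1)}(s)|<\cdots<|f_{\pi(k)}(s)|\},$$
which is open. Ties are the difficulty, so I will induct on $k$ using pairwise swaps instead. For $k=2$, set $V=\{s:|f_1(s)|>|f_2(s)|\}$, which is open, and let $W=\overline{V}$, which is clopen. Define $g_1=f_2$ and $g_2=f_1$ on $W$, and $g_1=f_1$ and $g_2=f_2$ on $S\setminus W$. These are continuous because $W$ is clopen, and they are reorderings of $(f_1,f_2)$ at every point. On $S\setminus W$ we have $|f_1|\le|f_2|$. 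On $W\setminus V\subseteq \partial V$, continuity forces $|f_1|=|f_2|$, since $|f_1|\ge|f_2|$ on $\overline V$ and $|f_1|\le |f_2|$ off $V$. Hence $|g_1|\le|g_2|$ everywhere. This is the key step, and it uses exactly that $\overline V$ is open.

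For general $k$ I will run a bubble sort: apply the two-element step successively to adjacent pairs, i.e.\ replace $(h_i,h_{i+1})$ by its sorted version. Each step preserves continuity and the property of being a pointwise reordering of the $f_i(s)$. Each step also performs a pointwise compare-and-swap: it outputs $(\min,\max)$ by modulus, where equal moduli may be arranged either way. Running the standard bubble sort network of $k(k-1)/2$ adjacent comparators therefore yields, at each point $s$, a sequence sorted by modulus. The reason is that a comparator network sorts any input, including inputs with ties, as long as each comparator outputs its two entries in nondecreasing order (by the 0--1 principle, or by checking the bubble sort directly). Here the order is the total preorder $|\cdot|$, which is the only subtle point.

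The main obstacle is the continuity-at-the-boundary argument in the two-element case, which I have handled through the clopen closure. Beyond that, the only care needed is to confirm that a sorting network remains valid for a preorder with ties. That holds because each comparator output satisfies $|{\rm first}|\le|{\rm second}|$, and the standard correctness argument for sorting networks only uses this.
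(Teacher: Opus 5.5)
Your proof is correct and follows the paper's strategy: the same two-element clopen-swap step, composed into a full sort. For the swap you use the single clopen set $\overline{\{|f_1|>|f_2|\}}$, where the paper uses the partition $\overline{U}_1$, $\overline{U}_2$, $S\setminus(\overline{U}_1\cup\overline{U}_2)$.

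The remaining difference is how the comparators are assembled. You use a bubble-sort network, and its correctness with ties is immediate, since at each point the vector of moduli evolves exactly as under ordinary min/max comparators on real numbers. The paper instead uses a recursion: sort the first $k$ functions, compare-and-swap the top one with $f_{k+1}$, then re-sort the first $k$.
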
 

\begin{proof}
We proceed by  induction on $k \ge 1$. For $k=1$, the statement holds trivially.
 
$\mathbf{Base ~ Case}$ ($k=2$): We define
\begin{align*}
    U_1 &:= \{ s \in S : |f_1(s)| < |f_2(s)| \}, \\
    U_2 &:= \{ s \in S : |f_1(s)| > |f_2(s)| \}, \\
    E   &:= \{ s \in S : |f_1(s)| = |f_2(s)| \}.
\end{align*}
Note that $S = U_1 \sqcup U_2 \sqcup E$, and $U_1, U_2$ are open sets. Since $S$ is an extremely disconnected (Stonean) space, the closures $\overline{U}_1$ and $\overline{U}_2$ are disjoint clopen sets satisfying
\begin{align*}
    V_1 &:= \overline{U}_1 \subseteq \{ s \in S : |f_1(s)| \le |f_2(s)| \}, \\
    V_2 &:= \overline{U}_2  \subseteq \{ s \in S : |f_1(s)| \ge |f_2(s)| \}, \\
    V_3 &:= S \setminus (V_1 \cup V_2) \subseteq E.
\end{align*}
The sets $V_1, V_2$, and $V_3$ are clopen and form a partition of $S$. Define
\begin{align*}
    g_{1} &:= f_1 \cdot \mathbbm{1}_{V_1 \sqcup V_3} + f_2 \cdot \mathbbm{1}_{V_2}, \\
    g_{2} &:= f_2 \cdot \mathbbm{1}_{V_1 \sqcup V_3} + f_1 \cdot \mathbbm{1}_{V_2}.
\end{align*}
Then $g_1, g_2 \in C(S)$ satisfy $|g_1(s)| \le |g_2(s)|$ for all $s \in S$, completing the base case.

$\mathbf{Inductive ~ Step}$: Assume the statement holds for any $k$ continuous functions on $S$. Let $f_1, f_2, \dots, f_{k+1} \in C(S)$. ~ Applying the induction hypothesis to $\{f_1, \dots, f_{k}\}$, we obtain a sorted sequence of continuous functions $\tilde{f}_1, \dots, \tilde{f}_{k}$ such that for every $s \in S$,
$$|\tilde{f}_1(s)| \le |\tilde{f}_2(s)| \le \dots \le |\tilde{f}_{k}(s)|$$
and $\tilde{f}_1(s), \tilde{f}_2(s), \dots, \tilde{f}_{k}(s)$ is a re-ordering of $f_1(s), \dots, f_{k}(s)$. 

Next, applying the base case to $\tilde{f}_{k}$ and $f_{k+1}$, we obtain continuous functions $\tilde{g}_{k}$ and ${g}_{k+1}$ such that for every $s \in S$,
$$ |\tilde{g}_{k}(s)| \le |g_{k+1}(s)| $$ and $ \, \tilde{g}_{k}(s), \, g_{k+1}(s)$ is a re-ordering of  \,$\tilde{f}_{k}(s), \,  f_{k+1}(s)$. Consequently for every $s \in S$, $\tilde{f}_1(s), \, \tilde{f}_2(s), \dots, \, \tilde{f}_{k-1}(s), \, \tilde{g}_{k}(s), \, g_{k+1}(s)$ is a re-ordering of $f_1(s), \, f_2(s),$ $\dots, f_{k+1}(s)$ and $|\tilde{f}_{i}(s)| \le |g_{k+1}(s)|$ for all $1 \le i \le k-1$. Applying the induction hypothesis once more to $\{\tilde{f}_1, \tilde{f}_2, \dots, \tilde{f}_{k-1}, \tilde{g}_k\}$, we obtain a sorted sequence of continuous functions $g_1, g_2, \dots, g_{k}$ such that for every $s \in S$,
$$|g_1(s)| \le |g_2(s)| \le \dots \le |g_{k}(s)|$$
and $g_1(s), g_2(s), \dots, g_{k}(s)$ is a re-ordering of $\tilde{f}_1(s), \tilde{f}_2(s), \dots, \tilde{f}_{k-1}(s), \tilde{g}_k (s)$.  Consequently for every $s \in S$, $g_1(s),$ $g_2(s), \dots, g_{k+1}(s)$ is a re-ordering of $f_1(s), f_2(s), \dots, f_{k+1}(s)$  and 
$$|g_1(s)| \le |g_2(s)| \le \dots \le |g_{k+1}(s)|,$$
completing the induction.
\end{proof}

\begin{lemma}\cite[Lemmas 3.1 and 3.2]{DP63}\label{lem: Decard}
Let $S$ be a Stonean space, $A \in M_k(C(S))$, and $\lambda \in C(S)$ such that $\det(A(s) - \lambda(s)I) = 0$ for all $s \in S$. Then there exists a unitary matrix $U \in M_k(C(S))$ such that
$$B = U^* A U = (b_{ij})_{1 \le i, j \le k},$$ where $b_{11} = \lambda$ and $b_{i1} = 0$ for all $2 \le i \le k$.
\end{lemma}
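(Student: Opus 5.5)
The approach is to reduce to finding a continuous unit eigenvector field and then completing it to a continuous unitary frame. The Stonean property of $S$ enters in one way only: a bounded continuous function defined on a dense open subset of $S$ (or of a clopen subset of $S$) extends continuously to the whole set. This holds because open subsets of an extremally disconnected space are $C^*$-embedded, so $\overline{D}$ is the Stone--\v{C}ech compactification of $D$ for $D$ open. Once a continuous unitary $U\in M_k(C(S))$ has first column $v$ with $A(s)v(s)=\lambda(s)v(s)$ for all $s$, we get $U^*AUe_1=U^*(\lambda v)=\lambda e_1$. This is exactly $b_{11}=\lambda$ and $b_{i1}=0$ for $i\ge 2$.

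\textbf{Step 1: the eigenvector field.} Put $B=A-\lambda I\in M_k(C(S))$, so $\operatorname{rank}B(s)\le k-1$ for every $s$. For $0\le j\le k-1$ the sets $O_j=\{s:\operatorname{rank}B(s)\ge j\}$ are open, since rank is lower semicontinuous. Set $W_j=O_j\setminus\overline{O_{j+1}}$. These are disjoint open sets, $B$ has rank exactly $j$ on $W_j$, and $\bigcup_j W_j$ is dense in $S$. The last claim holds because any nonempty open set meets some $W_j$: take the largest $j$ for which it meets $O_j$.

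Fix $j$ and enumerate the finitely many pairs $m=(I,J)$ of $j$-element row and column index sets. Let $G_m\subseteq W_j$ be the open set where the minor $\det B_{I,J}$ is nonzero. Define $D_m=G_m\setminus\overline{\bigcup_{m'<m}G_{m'}}$; these are disjoint open sets whose union is dense in $W_j$. On $D_m$ choose a column $c\notin J$, which exists since $j<k$. Define $w(s)$ by $w_c=1$, $w_i=0$ for $i\notin J\cup\{c\}$, and $w_J=-B_{I,J}(s)^{-1}B_{I,c}(s)$. This is continuous by Cramer's rule and satisfies the $I$-rows of $Bw=0$. The remaining rows are linear combinations of the $I$-rows because the rank is exactly $j$, so $Bw=0$ on $D_m$. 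Set $v=w/\|w\|$ there.

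The sets $D_m$, over all $j$ and $m$, are disjoint and open with dense union $D$, so $v$ is a bounded continuous $\mathbb{C}^k$-valued function on $D$. Extend it continuously to $S$. The identities $\|v\|=1$ and $B v=0$ persist on $S$ by continuity and density.

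\textbf{Step 2: completing to a unitary.} Repeat the same device with $V_i=\{s: v_i(s)\neq 0\}$, which cover $S$. Disjointify them into $D'_i=V_i\setminus\overline{\bigcup_{i'<i}V_{i'}}$. On $D'_i$, apply Gram--Schmidt to $v,e_1,\dots,\widehat{e_i},\dots,e_k$. These vectors are linearly independent because $v_i\ne0$, so Gram--Schmidt yields continuous orthonormal columns $u_1=v,u_2,\dots,u_k$. Extend each $u_\ell$ continuously from the dense open set $\bigcup_i D'_i$ to $S$. The relation $U(s)^*U(s)=I$ holds on a dense set and therefore everywhere. So $U\in M_k(C(S))$ is unitary with first column $v$, and $B=U^*AU$ has the required first column.

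I expect the main obstacle to be the extension property itself, that is, justifying that bounded continuous functions on dense open subsets of a Stonean space extend. I would prove it directly. For a real-valued bounded $f$ on a dense open $D$ and rational $q$, let $C_q=\overline{\{s\in D:f(s)<q\}}$, which is clopen. Define $F(s)=\inf\{q: s\in C_q\}$ and check that $F$ is continuous and agrees with $f$ on $D$, using that closures of disjoint open sets in $S$ are disjoint. Complex- and vector-valued functions are then handled coordinatewise via real and imaginary parts. A secondary point is checking the density claims for $\bigcup_j W_j$ and $\bigcup_m D_m$, which is routine.
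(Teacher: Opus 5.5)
Your proof is correct and is essentially the route the paper takes by citing Deckard--Pearcy: their Lemma 3.1 corresponds to your Step 1 (a continuous unit eigenvector field for the given eigenvalue function $\lambda$, i.e.\ a unit null-vector field for $A-\lambda I$), and their Lemma 3.2 corresponds to your Step 2 (completing it to a continuous unitary with that first column, so that $U^*AUe_1=\lambda e_1$). The paper only sketches this by citation, whereas your version is self-contained via the extension of bounded continuous functions from dense open subsets of a Stonean space, and it checks directly the point the sketch asserts: the construction works for any continuous $\lambda$ with $\det(A-\lambda I)\equiv 0$.
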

\begin{proof}[Proof Sketch]
The proof follows an argument analogous to those of \cite[Lemmas 3.1 and 3.2]{DP63}. Specifically, the construction in \cite[Lemma 3.1]{DP63} allows $\lambda$ to be chosen as an arbitrary eigenvalue function, from which it follows that the leading entry $b_{11}$ in \cite[Lemma 3.2]{DP63} coincides precisely with $\lambda$.
\end{proof}

\begin{prop}\label{lem:re-ord}
    Let $S$ be a Stonean space, and $A \in M_k(C(S))$. Then there exist a unitary $U \in M_k(C(S))$ such that $B = U^* A U = (b_{ij})_{1 \le i, j \le k}$ is upper triangular, (namely, $b_{ij} = 0$ if $i > j$) and for every $s \in S$
$$|b_{11}(s)| \le |b_{22}(s)| \le \dots \le |b_{kk}(s)|.$$
\end{prop}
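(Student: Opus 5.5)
Our plan is to combine Proposition~\ref{Unitarization 1} with Lemma~\ref{lem: Decard}, arguing by induction on $k$ and peeling off one eigenvalue at a time, always the one of \emph{smallest} modulus. First we need continuous eigenvalue functions. By the Deckard--Pearcy theorem \cite[Theorem 2]{DP63}, there is a unitary $W\in M_k(C(S))$ with $W^*AW$ upper triangular, with diagonal entries $f_1,\dots,f_k\in C(S)$. For every $s\in S$ the numbers $f_1(s),\dots,f_k(s)$ are then the eigenvalues of $A(s)$, counted with algebraic multiplicity. Proposition~\ref{Unitarization 1} gives $g_1,\dots,g_k\in C(S)$ with $|g_1(s)|\le\cdots\le|g_k(s)|$, and with $g_1(s),\dots,g_k(s)$ a reordering of $f_1(s),\dots,f_k(s)$. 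In particular $\det(A(s)-g_1(s)I)=0$ for every $s$.

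Next we apply Lemma~\ref{lem: Decard} with $\lambda=g_1$. This gives a unitary $U_1\in M_k(C(S))$ such that $U_1^*AU_1$ has first column $(g_1,0,\dots,0)^T$. Write
\[
U_1^*AU_1=\begin{pmatrix} g_1 & * \\ 0 & A' \end{pmatrix}, \qquad A'=(U_1^*AU_1)_{(k-1)}\in M_{k-1}(C(S)).
\]
Pointwise, the characteristic polynomial factors as $\det(zI-A(s))=(z-g_1(s))\det(zI-A'(s))$. Hence the eigenvalues of $A'(s)$, with multiplicity, are $g_2(s),\dots,g_k(s)$.

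Now apply the induction hypothesis to $A'$. This gives a unitary $V'\in M_{k-1}(C(S))$ with $V'^*A'V'$ upper triangular and with diagonal entries $h_2,\dots,h_k$ ordered so that $|h_2|\le\cdots\le|h_k|$ pointwise. For each $s$, the values $h_2(s),\dots,h_k(s)$ are the eigenvalues of $A'(s)$, namely the multiset $\{g_2(s),\dots,g_k(s)\}$. Set $U=U_1(1\oplus V')$. Then $U^*AU$ is upper triangular with diagonal $g_1,h_2,\dots,h_k$.

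It remains to check the ordering at the first position. Each $|h_j(s)|$ equals $|g_i(s)|$ for some $i\ge2$, so $|h_j(s)|\ge|g_1(s)|$. Therefore $|g_1(s)|\le|h_2(s)|\le\cdots\le|h_k(s)|$ for all $s$, which completes the induction. The base case $k=1$ is trivial.

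The main obstacle is the justification that the diagonal entries of a unitarily conjugated triangular form, and the entries of $A'$, carry exactly the eigenvalue multisets pointwise. This needs $C(S)$-valued matrix operations to commute with evaluation at $s\in S$. That holds because evaluation is a $*$-homomorphism $M_k(C(S))\to M_k(\mathbb{C})$, under which unitaries go to unitaries; we would state this explicitly. A secondary point is that Lemma~\ref{lem: Decard} must allow an arbitrary continuous eigenvalue function $\lambda$, which is exactly what its stated version provides.
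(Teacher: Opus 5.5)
Your proposal is correct and follows the paper's proof essentially step for step. You sort continuous eigenvalue functions with Proposition~\ref{Unitarization 1}, split off the smallest-modulus one via Lemma~\ref{lem: Decard}, and apply the induction hypothesis to the trailing $(k-1)\times(k-1)$ block with $U=U_1(1\oplus V')$. Your final step compares moduli: each $h_j(s)$ is one of $g_2(s),\dots,g_k(s)$, hence $|h_j(s)|\ge|g_1(s)|$. This is slightly more careful than the paper's claim $\tilde b_{ii}=\lambda_{(i+1)}$, which can fail pointwise when distinct eigenvalues share a modulus, though only the modulus inequality is needed.
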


\begin{proof} 
 We proceed by  induction on $k \ge 1$. For $k=1$, the statement holds trivially.
 
 $\mathbf{Base ~ case}$ ($k=2$): By \cite[Theorem 1]{DP63}, there exist functions $\lambda_1, \lambda_2 \in C(S)$ satisfying $\det(\lambda_i I - A) = 0$
for $i \in \{1,2\}$ and 
$$\det(\lambda I - A) =  (\lambda - \lambda_1) \, (\lambda - \lambda_2) \qquad \text{ for all } \lambda \in \mathbb{C}.$$
By Proposition \ref{Unitarization 1}, we obtain reordered continuous functions $\lambda_{(1)}, \lambda_{(2)} \in C(S)$ such that $|\lambda_{(1)}(s)|  \le  |\lambda_{(2)}(s)| $ for every $s \in S$. Lemma \ref{lem: Decard} ensures the existence of a unitary matrix $U \in M_2(C(S))$ such that $B=U^* \, A \, U = (b_{ij})_{1 \le i,j \le 2}$ is upper triangular with $b_{11}=\lambda_{(1)}$. Since 
\begin{align*}
    \det(\lambda I - B) =& \, \det(\lambda I - A)\\
    \implies (\lambda - \lambda_{(1)})(\lambda - b_{22})=& \, (\lambda - \lambda_{(1)})(\lambda - \lambda_{(2)}),
\end{align*}
 it follows that $b_{22}=\lambda_{(2)}$, completing the base case.
 
$\mathbf{Inductive ~ step}$: Assume the statement holds for matrices in $ M_k(C(S))$ for some $k \ge 2$. Let $A \in  M_{k+1}(C(S))$.~ By \cite[Theorem 1]{DP63} and Proposition \ref{Unitarization 1}, there exist functions $\lambda_1, \dots, \lambda_{k+1} \in C(S)$ representing a pointwise non-decreasing ordering of the eigenvalues, i.e.
\[
  |\lambda_{(1)}(s)|  \le  |\lambda_{(2)}(s)| \le \dots \le |\lambda_{(k+1)}(s)| \qquad \text{ for all }  s \in S,  
\]
and
\[
  \det(\lambda I - A) = \prod_{i=1}^{k+1} (\lambda - \lambda_i) \qquad \text{ for all } \lambda \in \mathbb{C}.
\] 
By Lemma \ref{lem: Decard}, there exists a unitary element $U_1 \in M_{k+1}(C(S))$ such that
$$B_1 = U_1^* A U_1 = \begin{bmatrix} \lambda_{(1)} & w \\ 0 & B' \end{bmatrix},$$
where $w \in {C(S)}^k$ and $B' \in M_{k}(C(S))$. By the induction hypothesis applied to $B'$, there exists a unitary matrix $U' \in M_{k}(C(S))$ such that $\tilde{B} = (U')^* B' U' = (\tilde{b}_{ij})_{1 \le i,j \le k}$ is upper triangular matrix and satisfies
$ |\tilde{b}_{11} (s)|  \le \dots \le |\tilde{b}_{k, k} (s)| $ for all $s \in S$. Construct the unitary operator 
\[
 U= U_1 \, \begin{bmatrix} 1 & 0 \\ 0 & U' \end{bmatrix} \in  M_{k+1}(C(S)).
\]
Then $B=U^* A U$ is upper triangular of the form 
\[
  B= \begin{bmatrix} \lambda_{(1)}  & w U' \\ 0 & \tilde{B} \end{bmatrix}.
\]
Comparing the characteristic polynomials, $\tilde{b}_{ii} = \lambda_{(i+1)}$ for each $1 \le i \le k$. Thus the diagonal entries of $B$ satisfy $ |\lambda_{(1)}(s)| \le |\tilde{b}_{11}(s)| \le \dots \le |\tilde{b}_{k, k}(s)|$ for all $s \in S$, completing the induction.
\end{proof}
 We similarly prove for the case when the absolute values of diagonal entries are non-increasing pointwise.
\begin{cor}\label{cor: re-ord}
     Let $S$ be a Stonean space, and $A \in M_k(C(S))$. Then there exist a unitary $U \in M_k(C(S))$ such that $B = U^* A U = (b_{ij})_{1 \le i, j \le k}$ is upper triangular, (namely, $b_{ij} = 0$ if $i > j$) and for every $s \in S$
$$|b_{11}(s)| \ge |b_{22}(s)| \ge \dots \ge |b_{kk}(s)|.$$
\end{cor}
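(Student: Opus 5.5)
We prove Corollary \ref{cor: re-ord} by repeating the inductive argument of Proposition \ref{lem:re-ord}, with every use of the non-decreasing ordering replaced by a non-increasing one.

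\textbf{Reversing the eigenvalue ordering.} Start, as before, from \cite[Theorem 1]{DP63}. It gives eigenvalue functions $\lambda_1,\dots,\lambda_k \in C(S)$ with
\[
\det(\lambda I - A)=\prod_{i}(\lambda-\lambda_i) \qquad \text{for all } \lambda \in \mathbb{C}.
\]
Apply Proposition \ref{Unitarization 1} to obtain the non-decreasing reordering $g_1,\dots,g_k$, so that $|g_1(s)|\le\cdots\le|g_k(s)|$ for every $s\in S$. Then set $\lambda_{(i)} := g_{k+1-i}$. These are continuous, at each $s$ they are a reordering of the eigenvalues $\lambda_1(s),\dots,\lambda_k(s)$, and they satisfy
\[
|\lambda_{(1)}(s)|\ge\cdots\ge|\lambda_{(k)}(s)| \qquad \text{for all } s \in S.
\]
Alternatively, one can rerun the proof of Proposition \ref{Unitarization 1} with the inequalities reversed. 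In the base case this amounts to swapping the roles of $V_1$ and $V_2$.

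\textbf{Induction on $k$.} The case $k=1$ is trivial.

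\emph{Base case.} Since $\lambda_{(1)}$ is an eigenvalue function, Lemma \ref{lem: Decard} gives a unitary $U_1$ with
\[
U_1^*AU_1=\begin{bmatrix}\lambda_{(1)} & w\\ 0 & B'\end{bmatrix}.
\]
For $k=2$ this is already the result: the characteristic polynomial of $U_1^*AU_1$ equals that of $A$, so cancelling the factor $(\lambda-\lambda_{(1)})$ forces $B'=\lambda_{(2)}$.

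\emph{Inductive step.} For general $k$, the same cancellation shows that the characteristic polynomial of $B'$ is $\prod_{i\ge 2}(\lambda-\lambda_{(i)})$. Apply the induction hypothesis (the non-increasing version in size $k-1$) to $B'$. This gives a unitary $U'$ such that $(U')^*B'U'$ is upper triangular with non-increasing diagonal moduli. Set
\[
U=U_1\begin{bmatrix}1&0\\0&U'\end{bmatrix}.
\]
Then $B=U^*AU$ is upper triangular with $b_{11}=\lambda_{(1)}$.

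\textbf{Matching the diagonal to the ordered eigenvalues.} The diagonal entries $b_{22},\dots,b_{kk}$ of $B$ are the eigenvalue functions of $B'$ in non-increasing modulus at every point. It remains to see that they agree in modulus with $\lambda_{(2)},\dots,\lambda_{(k)}$. At each $s$, both lists are the roots of the same polynomial, counted with multiplicity, and both are sorted by non-increasing modulus. Two sorted lists of the same multiset of moduli agree entrywise, so $|b_{ii}(s)|=|\lambda_{(i)}(s)|$ for all $i$. It follows that
\[
|b_{11}(s)|=|\lambda_{(1)}(s)|\ge|\lambda_{(2)}(s)|=|b_{22}(s)| \qquad \text{for every } s,
\]
and the remaining inequalities $|b_{22}(s)|\ge\cdots\ge|b_{kk}(s)|$ hold by the induction hypothesis. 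This completes the induction.

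\textbf{Main point of care.} The only delicate step is the one just made: the comparison of characteristic polynomials identifies the diagonal with the $\lambda_{(i)}$ only pointwise and only up to order. One must therefore argue through the moduli multiset, as above, rather than claim equality of the functions themselves.

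A shorter route would be to apply Proposition \ref{lem:re-ord} and then conjugate by the reversal permutation $J=\sum_i E_{i,k+1-i}$. However, this turns an upper triangular matrix into a lower triangular one, so it does not directly give the corollary. Adapting the inductive proof as above is the safer route.
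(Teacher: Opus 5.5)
Your proof is correct, but it takes a different route from the paper.

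\textbf{The paper's route.} The paper does not rerun the induction. It applies Proposition~\ref{lem:re-ord} to $A' = JA^*J^*$, where $J$ is the anti-diagonal permutation unitary. This gives $T = V^*A'V$ upper triangular with $|t_{11}|\le\cdots\le|t_{kk}|$. It then sets $U = J^*VJ$, so that $U^*AU = JT^*J^*$. The adjoint makes $T^*$ lower triangular, and the reversal turns it back into an upper triangular matrix with $b_{ii} = \overline{t_{k+1-i,k+1-i}}$. Hence the diagonal moduli are non-increasing. So the ``shorter route'' you set aside does work once you combine the reversal with an adjoint. It even suffices to apply the proposition to $A^*$ itself and take $U = VJ$.

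\textbf{Trade-off.} The paper's reduction is two lines and uses Proposition~\ref{lem:re-ord} as a black box. Yours is self-contained but repeats the whole induction, relying on Lemma~\ref{lem: Decard} in the same way.

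\textbf{Your point of care.} It is well taken. The paper's proof of Proposition~\ref{lem:re-ord} asserts $\tilde b_{ii} = \lambda_{(i+1)}$. That is only justified up to permutation among eigenvalues of equal modulus. The imprecision is harmless because only moduli matter, but arguing through moduli, as you do, is the right way to state it.

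\textbf{A simplification.} Your sorted-multiset comparison is more than you need. Each $b_{ii}(s)$ with $i\ge 2$ is an eigenvalue of $A(s)$, and $|\lambda_{(1)}(s)|$ is the largest eigenvalue modulus at $s$. So $|b_{11}(s)|\ge|b_{ii}(s)|$ is immediate, and the remaining inequalities come from the induction hypothesis.
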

\begin{proof}
Let $J= \sum_{i=1}^k E_{i,k-i+1} \in M_k(C(S))$ be the anti-diagonal permutation unitary matrix (having $\mathbbm{1}$ on its anti-diagonal and $0$ elsewhere). Applying Lemma 2.10 to the matrix $A' = J A^* J^* \in M_k(C(S))$ yields a unitary $V \in M_k(C(S))$ such that $T = V^* A' V$ is upper triangular with pointwise non-decreasing diagonal entries 
\[
|t_{11}(s)| \le |t_{22}(s)| \le \dots \le |t_{kk}(s)| \quad \text{for all } s \in S.
\]
Setting $U = J^* V J \in M_k(C(S))$, the transformed matrix $B = U^* A U = J T^* J^*$ is upper triangular, and its diagonal entries satisfy $b_{ii} = \bar{t}_{k-i+1, k-i+1}$ for $1 \le i \le k$. Consequently, we obtain
\[
|b_{11}(s)| \ge |b_{22}(s)| \ge \dots \ge |b_{kk}(s)| \quad \text{for all } s \in S,
\]
which completes the proof.
\end{proof}

\subsection{Decomposability via Haagerup--Schultz projections}
\label{subsec:Decomp}

An operator $A$ on a Hilbert space ${\mathcal H}$ is said to be {\em decomposable} if,
for every pair $(U,V)$ of open sets in the complex plane whose union is the whole complex plane, there are closed, $A$-invariant subspaces ${\mathcal H}'$ and ${\mathcal H}''$
such that ${\mathcal H}'+{\mathcal H}''={\mathcal H}$ and the restrictions of $A$
to these have spectra contained in $U$ and $V$, respectively.

The following characterization, established by Dykema, Noles, and Zanin, explicitly connects the decomposability of an operator with spectral inclusions of its restrictions to Haagerup–Schultz invariant subspaces associated with open disks.
\begin{prop}\label{prop:disks} \cite[Proposition 3.1]{DNZ17}
Let $A\in\Mcal$.
Then the following are equivalent:
\begin{enumerate}[label=(\roman*),leftmargin=25pt]
\item\label{it:Tdec} $A$ is decomposable.
\item\label{it:sigmaG}  For every open disk $D$ in $\Cpx$,
\begin{align}
\sigma(AP(A,\overline{D}))&\subseteq \overline{D}, \label{eq:sigG} \\
\sigma(AP(A,\Cpx\setminus D))&\subseteq\Cpx\setminus D, \label{eq:sigGc} \\
\sigma((1-P(A,\overline{D}))A)&\subseteq \Cpx\setminus D, \label{eq:sig1-G} \\
\sigma((1-P(A,\Cpx\setminus D))A)&\subseteq \overline{D}, \label{eq:sig1-Gc}
\end{align}
where the spectra are computed in the compressions of $\Mcal$ by the projections $P(A,\overline{D})$, $P(A,\Cpx\setminus D)$,
$1-P(A,\overline{D})$ and $1-P(A,\Cpx\setminus D)$, respectively.
\end{enumerate}
\end{prop}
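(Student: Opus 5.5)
We prove the two implications separately. The bridge between them is that, by Proposition~\ref{HS E and F result}, the Haagerup--Schultz subspaces for disks are exactly $E(A-\lambda,r)$ and $F(A-\lambda,r)$. These are defined by local growth of powers, much like the local spectral subspaces $\mathcal{H}_A(F)=\{\xi:\sigma_A(\xi)\subseteq F\}$ of local spectral theory.

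For \ref{it:Tdec}$\Rightarrow$\ref{it:sigmaG}, assume $A$ is decomposable. Then $A$ has the single-valued extension property, and for every closed set $F$ the local spectral subspace $\mathcal{H}_A(F)$ is closed, hyperinvariant, and satisfies
\[
\sigma(A|_{\mathcal{H}_A(F)})\subseteq F,\qquad \sigma(A_{\mathcal{H}/\mathcal{H}_A(F)})\subseteq\overline{\Cpx\setminus F}.
\]
\begin{enumerate}[label=(\alph*),leftmargin=25pt]
\item Fix an open disk $D=B(\lambda,r)$. Using Definition~\ref{HS E and F definition}, I would first show
\[
E(A-\lambda,r)=\mathcal{H}_A(\overline D),\qquad F(A-\lambda,r)=\mathcal{H}_A(\Cpx\setminus D).
\]
The inclusions $\mathcal{H}_A(\cdot)\subseteq E,F$ come from the local resolvent: Cauchy estimates give $\limsup\|(A-\lambda)^n\xi_n\|^{1/n}\le r$, with $\xi_n$ obtained by truncation. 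The reverse inclusions follow from maximality in Proposition~\ref{HS E and F result}, since $\mathcal{H}_A(F)$ is closed, invariant, affiliated with $\Mcal$ (being hyperinvariant), and its restriction has spectrum, hence Brown measure, in $F$.
\item Once these identifications hold, \eqref{eq:sigG} and \eqref{eq:sigGc} are the restriction spectral inclusions.
\item The compressions $1-P$ model the quotient $\mathcal{H}/\mathcal{H}_A(F)$, so \eqref{eq:sig1-G} and \eqref{eq:sig1-Gc} are the quotient spectral inclusions. Here I use that $\overline{\Cpx\setminus\overline D}=\Cpx\setminus D$ and $\overline{D}=\overline{\Cpx\setminus(\Cpx\setminus D)}$.
\end{enumerate}

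For \ref{it:sigmaG}$\Rightarrow$\ref{it:Tdec}, I would first handle covers built from disks.
\begin{enumerate}[label=(\alph*),leftmargin=25pt]
\item Take $U=B(\lambda,r')$ and $V=\Cpx\setminus\overline{B(\lambda,r)}$ with $r<r'$. Set $\mathcal{H}'=\operatorname{ran}P(A,\overline{B(\lambda,r)})$ and $\mathcal{H}''=\operatorname{ran}P(A,\Cpx\setminus B(\lambda,r'))$. By \eqref{eq:sigG} and \eqref{eq:sigGc}, the restrictions of $A$ to these subspaces have spectra in $\overline{B(\lambda,r)}\subseteq U$ and $\Cpx\setminus B(\lambda,r')\subseteq V$ respectively.
\item To prove $\mathcal{H}'+\mathcal{H}''=\mathcal{H}$, write $A$ as a $2\times2$ upper-triangular operator matrix with respect to $P=P(A,\overline{B(\lambda,r)})$. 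By \eqref{eq:sig1-G}, the corner $(1-P)A(1-P)$ has spectrum in $\Cpx\setminus B(\lambda,r)$.
\item Since $\sigma(AP)$ and $\sigma((1-P)A)$ then meet at most on the circle $|z-\lambda|=r$, I would shrink and enlarge slightly, using the gap between $r$ and $r'$. This lets me solve the Sylvester equation with the Rosenblum integral and produce a complement of $\mathcal{H}'$ inside $\mathcal{H}''$ up to a closed summand.
\item Lemma~\ref{p,q spectrum proj} controls the spectra of the intermediate compressions when I replace $r$ by nearby radii.
\end{enumerate}

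For a general cover $U\cup V=\Cpx$, I would use a standard reduction in local spectral theory. It suffices to verify the splitting for a subbasis-type family of covers, here disks and complements of disks, and then iterate. Concretely:
\begin{enumerate}[label=(\alph*),leftmargin=25pt]
\item Cover the compact set $\sigma(A)\setminus V$ by finitely many disks whose closures lie in $U$.
\item Apply the disk splitting successively, restricting to invariant pieces at each stage.
\item At each stage, check that the compressed operators again satisfy \ref{it:sigmaG} relative to $P\Mcal P$. This uses Lemma~\ref{lem:basicHS} and the fact that Haagerup--Schultz projections of restrictions are cut-downs of those of $A$.
\end{enumerate}

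The main obstacle is this last passage from disks to arbitrary open covers, together with proving that the sum of the two invariant subspaces is all of $\mathcal{H}$ rather than merely dense. I would first try to avoid the iteration by invoking Albrecht's criterion. That criterion says decomposability follows if, for every closed $F$, the maximal spectral subspace exists and the quotient spectral inclusion holds. Taking $F$ to be a finite union of closed disks, the Haagerup--Schultz projections satisfy the lattice properties $P(A,B_1\cup B_2)=P(A,B_1)\vee P(A,B_2)$. With these, the disk conditions in \ref{it:sigmaG} should propagate to such finite unions.
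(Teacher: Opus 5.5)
The paper gives no proof of this proposition; it imports it from \cite{DNZ17}. I am therefore judging your argument on its own. The direction \ref{it:sigmaG}$\Rightarrow$\ref{it:Tdec} has a genuine gap.

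\textbf{The splitting step fails.} With $r<r'$, the subspaces $\operatorname{ran}P(A,\overline{B(\lambda,r)})$ and $\operatorname{ran}P(A,\Cpx\setminus B(\lambda,r'))$ leave out the open annulus $r<|z-\lambda|<r'$. By Theorem~\ref{HS main thm}(i), the trace of the join of their projections is at most $1-\mu_A(\{r<|z-\lambda|<r'\})$. For example, if $A=c\cdot 1$ with $|c-\lambda|=(r+r')/2$, both subspaces are $0$. So no Sylvester--Rosenblum argument can give $\mathcal{H}'+\mathcal{H}''=\mathcal{H}$. The radii would have to overlap, and even then you do not address whether the sum is closed.

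\textbf{The passage to arbitrary covers is only asserted.} This is the more serious problem. The iteration over disks, the claim that compressions again satisfy \ref{it:sigmaG}, and the claim that the disk conditions ``propagate'' to finite unions via $P(A,B_1\cup B_2)=P(A,B_1)\vee P(A,B_2)$ are all unproved. Nothing in your argument controls the spectrum of $A$ on the closed span of two invariant subspaces.

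\textbf{The missing idea is monotonicity}, which is where finiteness of $\Mcal$ enters. Lemma~\ref{p,q spectrum proj} gives $\sigma(Ap)\subseteq\sigma(Aq)$ and $\sigma((1-q)A)\subseteq\sigma((1-p)A)$ for $A$-invariant $p\le q$, because a left inverse in the finite algebra $p\Mcal p$ is an inverse. Also, $B_1\subseteq B_2$ implies $P(A,B_1)\le P(A,B_2)$. Now fix a closed set $F$.
\begin{itemize}
\item If $\lambda\notin F$, take an open disk $D\ni\lambda$ with $D\cap F=\emptyset$. Then \eqref{eq:sigGc} gives $\sigma(AP(A,F))\subseteq\sigma(AP(A,\Cpx\setminus D))\subseteq\Cpx\setminus D$.
\item If $\lambda\in\operatorname{int}F$, take $D\ni\lambda$ with $\overline D\subseteq F$. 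Then \eqref{eq:sig1-G} gives $\sigma((1-P(A,F))A)\subseteq\sigma((1-P(A,\overline D))A)\subseteq\Cpx\setminus D$.
\end{itemize}
Hence $Y=P(A,F)\mathcal{H}$ satisfies $\sigma(A|_Y)\subseteq F$ and $\sigma(A_{\mathcal{H}/Y})\subseteq\overline{\Cpx\setminus F}$ for every closed $F$. This is the Albrecht-type characterization of decomposability you invoke at the end, in the form that allows an arbitrary closed invariant $Y$. No disk splittings, iteration, or finite unions are needed.

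\textbf{A smaller slip in \ref{it:Tdec}$\Rightarrow$\ref{it:sigmaG}.} Your outline is sound, but the reverse inclusions $E(A-\lambda,r)\subseteq\mathcal{H}_A(\overline D)$ and $F(A-\lambda,r)\subseteq\mathcal{H}_A(\Cpx\setminus D)$ do not follow from the maximality in Proposition~\ref{HS E and F result}. Applied to $\mathcal{H}_A(\overline D)$, that maximality only gives $\mathcal{H}_A(\overline D)\subseteq E(A-\lambda,r)$ again. Appealing instead to the spectral maximality of $\mathcal{H}_A(\overline D)$ would presuppose \eqref{eq:sigG}.

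Use the quotient inclusion instead. For $r'>r$, let $Y=\mathcal{H}_A(\overline{B(\lambda,r')})$; the quotient operator $\hat A$ on $\mathcal{H}/Y$ has $\sigma(\hat A)\subseteq\Cpx\setminus B(\lambda,r')$. Suppose $\xi_n\to\xi$ and $\limsup\|(A-\lambda)^n\xi_n\|^{1/n}\le r$. Then $\|\hat\xi_n\|\le\|(\hat A-\lambda)^{-n}\|\,\|(A-\lambda)^n\xi_n\|$, whose $n$-th roots have $\limsup$ at most $r/r'<1$. So $\hat\xi=\lim\hat\xi_n=0$. Intersecting over $r'>r$ gives $\xi\in\mathcal{H}_A(\overline D)$. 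The $F$-case is symmetric, using the quotient by $\mathcal{H}_A(\Cpx\setminus B(\lambda,r''))$ for $r''<r$.
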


\subsection{Spectral operators}

Spectral operators were introduced by Dunford \cite{Dun54} as a generalization of normal operators. Normal operators have a associated projection-valued spectral measure which behaves well with respect to the spectrum. Spectral operators are those which have a well-behaved idempotent-valued spectral measure. To be precise,
 
\begin{defi}
An operator $T$ on a Hilbert space $\HEu$ is said to be {\em spectral} if there exists a map $E$ which sends every Borel subsets of
$\Cpx$ to an idempotent operator in $\HEu$, which satisfies the following criteria:
 
 \begin{enumerate}
     \item $ E(\Cpx) = 1 $
     \item $ E(B_1 \cap B_2) = E(B_1) E(B_2)$, for all Borel sets $B_1,B_2$.
     \item $ E( \cup_{i=1}^\infty B_i ) = \sum_{i=1}^\infty E(B_i)$ whenever $B_i$ are pairwise disjoint Borel sets in $\Cpx$, and the sum on the right converges in the strong operator topology.
     \item $\sup\{\norm{E(B)} : B \text{ Borel in } \Cpx\} < \infty$.
     \item $E(B)T=TE(B)$ for all Borel sets $B$.
     \item For all Borel subsets $B$ of $\Cpx$, the spectrum of $T$ restricted to the range of $E(B)$ is contained in the closure of $B$:
        $$ \sigma( T \restrict _{E(B) \HEu}) \subseteq \overline{B}. $$
 \end{enumerate}

\end{defi}

\begin{defi}
A spectral operator $D \in \mathscr{B}(\mathscr{H})$ is said to be \emph{scalar-type} if $D = \int_{\mathbb{C}} \lambda \, \mathrm{d} E_D(\lambda)$, where $E_D$ is the idempotent-valued resolution of the identity for $D$.
\end{defi}

The following theorem gives the canonical reduction of bounded spectral operators, which we shall refer to as the Dunford decomposition of a spectral operator.

\begin{thm} \cite[Theorem 8]{Dun54} \label{Theorem: Spectral decom}
An operator $T \in \mathscr{B}(\mathscr{H})$ is spectral if and only if there is a scalar-type operator $D$ and a quasinilpotent operator $Q$, in $\mathscr{B}(\mathscr{H})$ such that $DQ = QD$ and $T = D + Q$. Furthermore, this decomposition is unique, and $T$ and $D$ have identical spectra and identical idempotent-valued spectral resolutions.
\end{thm}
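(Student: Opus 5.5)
We argue the two implications separately and then prove uniqueness. Throughout we write $M=\sup_B\norm{E(B)}<\infty$.

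\emph{Spectral $\Rightarrow$ decomposition.} Let $T$ be spectral with resolution $E$. Condition (6) with $B=\Cpx\setminus\sigma(T)$ forces $E(\Cpx\setminus\sigma(T))=0$, so $E$ is supported on the compact set $\sigma(T)$. Define $D=\int_{\sigma(T)}\lambda\,dE(\lambda)$ as a norm limit of simple sums $\sum_i\lambda_iE(B_i)$; the uniform bound on $\norm{E(B)}$ makes this limit exist in norm. By construction $D$ is scalar-type with resolution $E$. Set $Q=T-D$. Since $E(B)T=TE(B)$ for all $B$, the operator $T$ commutes with every simple sum, hence with $D$, and so $DQ=QD$. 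Commutation with $D$ also gives $\sigma(T)=\sigma(D)$.

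It remains to show $Q$ is quasinilpotent. Fix $\varepsilon>0$ and partition $\sigma(T)$ into finitely many Borel sets $B_1,\dots,B_m$ of diameter $<\varepsilon$, choosing $\lambda_i\in\overline{B_i}$. The idempotents $E(B_i)$ are mutually annihilating and sum to $1$. Hence $\mathcal H$ splits as a topological direct sum of the invariant subspaces $\mathcal H_i=E(B_i)\mathcal H$, which reduce $T$, $D$ and $Q$. On $\mathcal H_i$ we have
$$Q|_{\mathcal H_i}=(T-\lambda_i)|_{\mathcal H_i}-(D-\lambda_i)|_{\mathcal H_i}.$$
The two terms are commuting operators. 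By condition (6), $r((T-\lambda_i)|_{\mathcal H_i})\le\varepsilon$. Also $\norm{(D-\lambda_i)E(B_i)}\le 4M\varepsilon$, by the standard estimate for integrals against bounded vector measures. Spectral radius is subadditive for commuting operators, so $r(Q|_{\mathcal H_i})\le(1+4M)\varepsilon$. Because the decomposition is a finite direct sum, $r(Q)=\max_i r(Q|_{\mathcal H_i})$. Letting $\varepsilon\to0$ gives $r(Q)=0$.

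\emph{Decomposition $\Rightarrow$ spectral.} Let $T=D+Q$ as in the statement, and take $E=E_D$. Conditions (1)--(4) hold because $D$ is scalar-type. For (5) we need $QE_D(B)=E_D(B)Q$, knowing only $QD=DQ$; this is the step I expect to be the main obstacle. On Hilbert space I would use Wermer's theorem: a scalar-type operator is similar to a normal one, say $D=SNS^{-1}$. Then $S^{-1}QS$ commutes with $N$, so by Fuglede's theorem it commutes with the spectral projections of $N$. Conjugating back, $Q$ commutes with $E_D(B)=SE_N(B)S^{-1}$. Given (5), $T$, $D$ and $Q$ all restrict to $E(B)\mathcal H$, where $Q$ remains quasinilpotent and commutes with $D$. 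Therefore
$$\sigma\bigl(T|_{E(B)\mathcal H}\bigr)=\sigma\bigl(D|_{E(B)\mathcal H}\bigr)\subseteq\overline B,$$
which is (6). The same commuting-quasinilpotent argument gives $\sigma(T)=\sigma(D)$ and that $T$ and $D$ have the same resolution.

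\emph{Uniqueness.} It suffices to show that a spectral operator has only one resolution of the identity; then any two decompositions $T=D_1+Q_1=D_2+Q_2$ give $D_1=D_2=\int\lambda\,dE$. Let $E$ and $E'$ both be resolutions for $T$. For closed $F$, I would identify $E(F)\mathcal H$ intrinsically as the set of vectors $x$ whose local resolvent $\lambda\mapsto(T-\lambda)^{-1}x$ extends analytically to $\Cpx\setminus F$. This identification uses condition (6) together with the single-valued extension property, which $T$ inherits from the direct sum splitting. Hence $E(F)\mathcal H=E'(F)\mathcal H$. The complements are also equal, since $(1-E(F))\mathcal H$ is the union over closed $F'\subseteq\Cpx\setminus F$ of $E(F')\mathcal H$. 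So $E(F)=E'(F)$ for every closed $F$, and by regularity of the measures $E=E'$ on all Borel sets.
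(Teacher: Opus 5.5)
The paper does not prove this theorem; it quotes it from Dunford \cite{Dun54}. The relevant comparison is therefore with the classical Dunford--Schwartz argument, and your proposal is correct in outline and largely reproduces it: define $D=\int\lambda\,dE$, show $Q=T-D$ is quasinilpotent by working on the ranges of the $E(B_i)$ for a fine partition, and get uniqueness from an intrinsic description of $E(F)\mathcal H$ via local resolvents.

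Your one genuine departure is the commutation step in the converse, where you use Wermer's similarity theorem together with Fuglede's theorem. This is legitimate on Hilbert space (the paper's Remark~\ref{Remark: Scalar-type} records the similarity), but it imports two substantial theorems. Your own uniqueness machinery already does the job on any Banach space, which is Dunford's route. If $QD=DQ$ and $f$ is analytic on $\Cpx\setminus F$ with $(D-\lambda)f(\lambda)=x$, then $(D-\lambda)Qf(\lambda)=Qx$. So $Q$ leaves $E_D(F)\mathcal H$ invariant for every closed $F$, and hence also $E_D(\Cpx\setminus F)\mathcal H$ (see (iv) below). Therefore $Q$ commutes with $E_D(F)$, and then with every $E_D(B)$ by a $\pi$--$\lambda$ argument.

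Similarly, you do not need SVEP to identify $E(F)\mathcal H$. For compact $K$ disjoint from $F$, the function $E(K)f(\lambda)$ on $\Cpx\setminus F$ and the resolvent of $T|_{E(K)\mathcal H}$ applied to $E(K)x$ on $\Cpx\setminus K$ patch together to an entire function vanishing at infinity. Liouville's theorem then gives $E(K)x=0$.

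A few steps need repair, none of them serious.

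(i) Applying condition (6) to $B=\Cpx\setminus\sigma(T)$ does not give $E(B)=0$, because $\overline B$ meets $\sigma(T)$ along $\partial\sigma(T)$. Instead, take closed $F$ disjoint from $\sigma(T)$. Since $(1-E(F))\mathcal H$ is a complementary invariant subspace, $\sigma(T|_{E(F)\mathcal H})\subseteq F\cap\sigma(T)=\emptyset$, which forces $E(F)=0$. Then exhaust $\Cpx\setminus\sigma(T)$ by countably many such closed sets and use countable additivity.

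(ii) ``By construction $D$ is scalar-type'' requires checking condition (6) for $D$. For $\lambda_0\notin\overline B$, the operator $\int\chi_B(\lambda)(\lambda-\lambda_0)^{-1}\,dE(\lambda)$ inverts $D-\lambda_0$ on $E(B)\mathcal H$, by multiplicativity of the integral.

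(iii) $\sigma(T)=\sigma(D)$ does not follow from commutation alone. It follows only after you know $Q$ is quasinilpotent.

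(iv) In the uniqueness step, $(1-E(F))\mathcal H=E(\Cpx\setminus F)\mathcal H$ is the \emph{closure} of $\bigcup_{F'}E(F')\mathcal H$, not the union itself. Multiplication by $t$ on $L^2[0,1]$ with $F=\{0\}$ is a counterexample. With the closure, $E(F)$ and $E'(F)$ have the same range and kernel, hence coincide, and the rest of your argument goes through.
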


\begin{rem}\label{Remark: Scalar-type}
The Dunford decomposition of a spectral operator may be viewed as a generalization of the Jordan-Chevalley decomposition of matrices, as scalar-type operators are similar to normal operators (see \cite[Theorem XV.6.4]{DS88}). More explicitly, if $D \in \mathscr{B}(\mathscr{H})$ is a scalar-type operator, there is a normal operator $N$ and an invertible operator $S$, in $\mathscr{B}(\mathscr{H})$ such that $D = S^{-1}NS$. It follows that the spectral projection of $N$ corresponding to Borel set $B$ is given by
\[
P_N(B) = S E_D(B) S^{-1},
\]
where $e_D$ denotes the idempotent-valued spectral resolution of $D$.
\end{rem}

\section{Main Results}

In this section, we assume $\mathcal{M}$ is a type $\mathrm{I}_k$ von Neumann algebra acting on a separable Hilbert Space $\mathcal{H}$. By Section \ref{subsec:UT}, $\mathcal{M} \cong M_{k}(C(S)) \cong M_{k}(L^{\infty}(X, \mu))$ for some Stonean space $S$, and a measurable space $(X,\mu)$. From Lemma \ref{inc order in two spaces} and Proposition \ref{lem:re-ord}, for $A \in \mathcal{M} $, there exists a unitary $U \in \mathcal{M}$ such that
\begin{equation}\label{A upper triang}
    A = U^* T U
\end{equation}
where $T$ is a $k \times k$ upper triangular matrix with entries in $L^{\infty}(X, \mu)$ such that the diagonal entries are non-decreasing almost everywhere, i.e.
\begin{equation}\label{eq:1st triang rep}
    T = (t_{ij})_{1 \le i, j \le k} \quad \text{where} \quad t_{ij} = 0 \text{ if } i > j
\end{equation}
and 
\begin{equation}\label{diag ordering}
     \lvert t_{11} \rvert \le \lvert t_{22} \rvert \le \dots \le \rvert t_{kk} \rvert \text{ a.e.}
\end{equation}

Fix $s \in [0,\infty)$, we partition $X$ into $(k+1)$ measurable sets $X_i$ for $0 \le i \le k$. Now, for $1 \le i \le k-1$ we define 
\begin{equation}\label{setdefn}
    X_i := \{x \in X \mid \lvert t_{ii}(x) \rvert \le s <  \lvert t_{i+1,i+1}(x) \rvert \},
\end{equation}
for $i=0$,
\[
X_0 := \{x \in X \mid s < |t_{11}(x)|\}
\]
and for $i=k$, 
\[
X_k := \{x \in X \mid |t_{kk}(x)| \leqslant s\}.
\]
Note that this partition depends on the operator $A$ and a scalar $s \ge 0$. We define the associated projection
\begin{equation*}\label{Qprimedefn}
    \begin{aligned}
   Q'(A, s) &= \sum_{i=1}^k \mathbbm{1}_{X_i} \cdot \left( \sum_{j=1}^i E_{jj} \right)\\
   &= \begin{pmatrix}
   \mathbbm{1}_{\sqcup_{i=1}^{k} X_i} & & & 0 \\
   & \mathbbm{1}_{\sqcup_{i=2}^{k} X_i} & & \\
   & & \ddots & \\
   0 & & & \mathbbm{1}_{X_k}
   \end{pmatrix}.
   \end{aligned}
\end{equation*}
Since $X_i$ is measurable for all $0 \le i \le k$, it follows that $Q'(A, s) \in \mathcal{M}$.

\begin{defi}\label{Q defi}
For any $s \ge 0$, we define the projection
\begin{equation*}
    Q(A, s) = U^* Q'(A, s) U \in \mathcal{M},
\end{equation*}
\end{defi}
and we will show that these projections are $A$-invariant.
\begin{lemma}\label{inv proj family}
For every $s \ge 0$, $Q(A, s)$ is an invariant projection of $A \in \mathcal{M}$. Consequently, $I - Q(A, s)$ is an invariant projection of $A^* \in \mathcal{M}$.
\end{lemma}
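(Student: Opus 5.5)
Since $A = U^*TU$ and $Q(A,s) = U^*Q'(A,s)U$, invariance transfers under unitary conjugation. Indeed,
\[
Q(A,s)\,A\,Q(A,s) = U^*Q'TQ'U \quad\text{and}\quad A\,Q(A,s)=U^*TQ'U,
\]
so $AQ = QAQ$ holds if and only if $TQ' = Q'TQ'$. It therefore suffices to show that $Q' := Q'(A,s)$ is an invariant projection of the upper triangular matrix $T$.

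\textbf{Step 1: $Q'$ is a diagonal projection whose diagonal is decreasing in the index.} The $j$-th diagonal entry of $Q'$ is $q_j := \mathbbm{1}_{\sqcup_{i\ge j} X_i}$, a central projection in $L^\infty(X,\mu)$. Because the sets $\sqcup_{i\ge j}X_i$ decrease in $j$, we have
\[
q_1 \ge q_2 \ge \dots \ge q_k,
\]
and hence $q_i q_j = q_j$ whenever $i\le j$. Here I would use the ordering \eqref{diag ordering} to check that the sets $X_0,\dots,X_k$ really partition $X$ up to a null set, so that $Q'$ is the stated projection.

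\textbf{Step 2: entrywise computation.} Since $T$ is upper triangular,
\[
(TQ')_{ij} = t_{ij}\,q_j \quad\text{and}\quad (Q'TQ')_{ij} = q_i\,t_{ij}\,q_j .
\]
Both vanish for $i>j$. For $i\le j$, commutativity of $L^\infty$ together with $q_iq_j=q_j$ gives $q_i t_{ij} q_j = t_{ij} q_j$. Hence $Q'TQ' = TQ'$, which establishes that $Q(A,s)$ is $A$-invariant. The only point needing care is this monotonicity of the diagonal indicators. Pointwise, it says that on $X_i$ the projection $Q'$ is the span of the first $i$ coordinates, which is invariant for an upper triangular matrix.

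\textbf{Step 3: the consequence for $A^*$.} Write $Q = Q(A,s)$. Taking adjoints in $QAQ = AQ$ gives $QA^*Q = QA^*$. Then
\[
(I-Q)A^*(I-Q) = A^* - QA^* - A^*Q + QA^*Q = A^* - A^*Q = A^*(I-Q).
\]
Thus $I-Q$ is invariant for $A^*$.
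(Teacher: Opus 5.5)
Your proof is correct and is essentially the paper's argument: you reduce via the unitary to $TQ'(A,s)=Q'(A,s)\,T\,Q'(A,s)$ and check it entrywise from the upper triangularity of $T$ and the nesting of the diagonal indicators of $Q'(A,s)$. The paper phrases the same check as $(I-Q'(A,s))\,T\,Q'(A,s)=0$, using that $\sqcup_{i<j}X_i$ and $\sqcup_{i\ge l}X_i$ are disjoint for $j\le l$, which is your $q_jq_l=q_l$. Your adjoint step is also slightly cleaner: it correctly gives $A^*(I-Q)=(I-Q)A^*(I-Q)$, whereas the paper's displayed identity has $(I-Q)A^*$ on the left, which is a slip.
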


\begin{proof}
Let $Q'(A,s)$ be defined as in \eqref{Qprimedefn}. Since $A = U^* T U$ as in \eqref{A upper triang}, where $T = (t_{jl})_{1 \le j,l \le k}$ is an upper-triangular operator matrix over $L^{\infty}(X, \mu)$, $t_{jl} = 0$ whenever $j > l$.

We first evaluate the product $(I - Q'(A,s)) T Q'(A,s)$. Writing $Q'(A,s)$ and $I - Q'(A,s)$ explicitly as diagonal operator matrices in terms of characteristic functions yields
\[
I - Q'(A,s) = \operatorname{diag}\left(\mathbbm{1}_{\sqcup_{i=0}^0 X_i}, \, \mathbbm{1}_{\sqcup_{i=0}^1 X_i}, \, \dots, \, \mathbbm{1}_{\sqcup_{i=0}^{k-1} X_i}\right)
\]
and
\[
Q'(A,s) = \operatorname{diag}\left(\mathbbm{1}_{\sqcup_{i=1}^k X_i}, \, \mathbbm{1}_{\sqcup_{i=2}^k X_i}, \, \dots, \, \mathbbm{1}_{X_k}\right).
\]

Taking the $(j,l)$-entry of the matrix product $(I - Q'(A,s)) T Q'(A,s)$, we obtain
\[
\left( (I - Q'(A,s)) T Q'(A,s) \right)_{jl} = \mathbbm{1}_{\sqcup_{i=0}^{j-1} X_i} \, t_{jl} \, \mathbbm{1}_{\sqcup_{i=l}^k X_i}.
\]
For $j \le l$, the indicator functions correspond to disjoint sets, forcing $\mathbbm{1}_{\sqcup_{i=0}^{j-1} X_i} \mathbbm{1}_{\sqcup_{i=l}^k X_i} = 0$. For $j > l$, $t_{jl} = 0$ by the upper-triangularity of $T$. Thus, every entry in $(I - Q'(A,s)) T Q'(A,s)$ vanishes, establishing
\[
(I - Q'(A,s)) T Q'(A,s) = 0 \implies T Q'(A,s) = Q'(A,s) T Q'(A,s).
\]

Using Definition \ref{Q defi} and applying the unitary transformation $Q(A,s) = U^* Q'(A,s) U$, we have
\begin{align*}
A Q(A,s) &= U^* T U \left( U^* Q'(A,s) U \right) \\
&= U^* T Q'(A,s) U \\
&= U^* Q'(A,s) T Q'(A,s) U \\
&= Q(A,s) A Q(A,s).
\end{align*}
Hence, $Q(A,s)$ is an invariant projection for $A$. Taking the adjoint yields $(I - Q(A,s)) A^* = (I - Q(A,s)) A^* (I - Q(A,s))$, which shows that $I - Q(A,s)$ is invariant under $A^*$.
\end{proof}
The following elementary lemma ensures that upper-triangular matrices over $L^\infty(X,\mu)$ whose diagonal entries are bounded away from zero are invertible within $M_m(L^\infty(X,\mu))$.
\begin{lemma}\label{lemma:inverse}
Let $B \in M_m(L^{\infty}(X, \mu))$ where $B = [b_{ij}]$. If $B$ is an upper triangular matrix and $\text{ess inf } |b_{ii}| \ge c > 0$ for all $i$, then $B$ has an inverse in $M_m(L^{\infty}(X, \mu))$.
\end{lemma}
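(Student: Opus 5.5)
We argue by induction on $m$ and construct the inverse as an upper-triangular matrix directly. Write $D = \operatorname{diag}(b_{11}, \dots, b_{mm})$ and $N = B - D$, so $N$ is strictly upper triangular with entries in $L^\infty(X,\mu)$. The hypothesis $\operatorname{ess\,inf}|b_{ii}| \ge c > 0$ means that $b_{ii}^{-1}$, defined pointwise almost everywhere, is an element of $L^\infty(X,\mu)$ with $\|b_{ii}^{-1}\|_\infty \le 1/c$. Hence $D$ is invertible in $M_m(L^\infty(X,\mu))$, with $D^{-1} = \operatorname{diag}(b_{11}^{-1}, \dots, b_{mm}^{-1})$.

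Next factor $B = D(I + D^{-1}N)$. The matrix $D^{-1}N$ is again strictly upper triangular, since left multiplication by a diagonal matrix preserves that pattern. Therefore $(D^{-1}N)^m = 0$: a product of $m$ strictly upper triangular $m\times m$ matrices vanishes over any commutative ring, by the same index-counting argument used for scalars with the elementary matrices $E_{ij}$. It follows that $I + D^{-1}N$ is invertible, with inverse given by the finite Neumann series
\[
(I + D^{-1}N)^{-1} = \sum_{j=0}^{m-1} (-1)^j (D^{-1}N)^j .
\]
One checks this by telescoping: multiplying the sum by $I + D^{-1}N$ on either side gives $I - (-1)^m (D^{-1}N)^m = I$.

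Setting $B^{-1} = \bigl(\sum_{j=0}^{m-1} (-1)^j (D^{-1}N)^j\bigr) D^{-1}$ then gives a two-sided inverse of $B$. Every entry of $B^{-1}$ is a finite sum of products of entries of $B$ and of the $b_{ii}^{-1}$, so all entries are essentially bounded and $B^{-1}$ lies in $M_m(L^\infty(X,\mu))$.

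There is no serious obstacle here. The only point needing care is that $1/b_{ii}$ is genuinely an $L^\infty$ function. This is exactly where the essential infimum bound is used rather than mere a.e. nonvanishing: we need $|b_{ii}| \ge c$ a.e., so that $|1/b_{ii}| \le 1/c$ a.e. One could instead do the inductive block computation
\[
\begin{pmatrix} B' & v \\ 0 & b_{mm}\end{pmatrix}^{-1} = \begin{pmatrix} B'^{-1} & -B'^{-1}v\,b_{mm}^{-1} \\ 0 & b_{mm}^{-1}\end{pmatrix},
\]
but the Neumann-series argument avoids the induction.
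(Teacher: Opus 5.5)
Your proof is correct, but it takes a different route from the paper. The paper argues by induction on $m$. It writes $B=\begin{pmatrix} B' & v_0\\ 0 & b_{mm}\end{pmatrix}$ and verifies the block formula for $B^{-1}$, which you mention only as an alternative at the end.

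You instead factor $B=D(I+D^{-1}N)$ and invert the unipotent factor by a finite Neumann series. This needs only the fact that strictly upper triangular $m\times m$ matrices over a ring are nilpotent of order $m$.

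Your route buys three things:
\begin{enumerate}[label=(\roman*)]
\item a closed, non-recursive formula for $B^{-1}$;
\item upper-triangularity of the inverse for free;
\item a quantitative bound, $\|B^{-1}\|\le c^{-1}\sum_{j=0}^{m-1}(\|N\|/c)^j$, from $\|D^{-1}\|\le 1/c$.
\end{enumerate}
The paper's induction is shorter to write. It also mirrors the leading-principal-submatrix structure the paper exploits in Proposition~\ref{spectrum 1}.

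Both arguments use the essential-infimum hypothesis in the same way, to get $b_{ii}^{-1}\in L^\infty(X,\mu)$. One small point of presentation: your first sentence announces an induction on $m$ that your argument never uses.
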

\begin{proof}
We use induction on $m$. For $m=1$, $B = b_{11} \in L^{\infty}(X, \mu)$ with $\text{ess inf } |b_{11}| \ge c$. Then $\frac{1}{b_{11}} \in L^{\infty}(X, \mu)$, so $B^{-1} = \frac{1}{b_{11}}$.
Assume the claim is true for $m-1$. Let $B \in M_m(L^{\infty}(X, \mu))$ be represented as
\begin{equation*}
    B = \begin{pmatrix} B' & v_0 \\ 0 & b_{mm} \end{pmatrix}
\end{equation*}
where $B' \in M_{m-1}(L^{\infty}(X, \mu))$ is upper triangular and satisfies the assumptions in the lemma, $ v_0 \in (L^{\infty}(X, \mu))^{ (m-1)}$ and $b_{mm} \in L^{\infty}(X, \mu)$. The inverse $B^{-1}$ is defined as
\begin{equation*}
    B^{-1} := \begin{pmatrix} (B')^{-1} & -(B')^{-1} v_0 \frac{1}{b_{mm}} \\ 0 & \frac{1}{b_{mm}} \end{pmatrix} \in M_m(L^{\infty}(X, \mu)).
\end{equation*}
It is easily verified that $B^{-1} B = B B^{-1} = I_m$. Hence, the claim holds.
\end{proof}
The following proposition establishes that the constructed projection $Q(A,s)$ restricts $A$ to an invariant subspace where its spectrum is localized inside the closed disk $s\overline{\mathbb{D}}$.
\begin{prop}\label{spectrum 1}
    For any $s \ge 0$,
\begin{enumerate}[label=(\roman*),leftmargin=25pt]
    \item $\sigma(A Q(A, s)) \subseteq \overline{s\mathbb{D}}$,
    \item $\sigma((I - Q(A, s)) A) \subseteq \mathbb{C} \setminus s\mathbb{D}$,
\end{enumerate}
where the spectra of $A Q(A, s)$ and $(I - Q(A, s)) A$ are considered as elements of the algebras $Q(A, s) \mathcal{M} Q(A, s)$ and $(I-Q(A, s)) \mathcal{M} (I-Q(A, s))$, respectively.
\end{prop}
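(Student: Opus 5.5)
Since $Q(A,s)=U^*Q'(A,s)U$ and $A=U^*TU$, unitary conjugation by $U$ carries $Q\mathcal{M}Q$ onto $Q'\mathcal{M}Q'$ and $AQ$ onto $TQ'$, preserving spectra. So I will prove (i) and (ii) for $T$ and $Q'=Q'(A,s)$. Next I decompose along the partition $X=\bigsqcup_{i=0}^k X_i$. Each $\mathbbm{1}_{X_i}$ is a central projection, so
\[
Q'\mathcal{M}Q' \cong \bigoplus_{i=1}^k M_i\bigl(L^\infty(X_i,\mu|_{X_i})\bigr), \qquad TQ' \mapsto \bigoplus_{i=1}^{k} \bigl(T|_{X_i}\bigr)^{(i)}.
\]
Here the $i$-th summand is the corner $\sum_{j\le i}E_{jj}$ restricted to $X_i$. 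Likewise
\[
(I-Q')\mathcal{M}(I-Q') \cong \bigoplus_{i=0}^{k-1} M_{k-i}\bigl(L^\infty(X_i)\bigr), \qquad (I-Q')T \mapsto \bigoplus_{i=0}^{k-1} \bigl(T|_{X_i}\bigr)_{(k-i)}.
\]
Summands with $\mu(X_i)=0$ are zero algebras and are dropped. The spectrum of a finite direct sum is the union of the spectra of the summands. Each summand is upper triangular, since leading and trailing principal submatrices of an upper-triangular matrix remain upper triangular.

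For (i), fix $\lambda$ with $|\lambda|>s$ and $i\ge1$. On $X_i$, the ordering \eqref{diag ordering} gives $|t_{jj}|\le|t_{ii}|\le s$ a.e. for every $j\le i$. Hence the diagonal entries of $(T|_{X_i})^{(i)}-\lambda I_i$ satisfy $|t_{jj}-\lambda|\ge|\lambda|-s>0$ a.e. By Lemma \ref{lemma:inverse}, this upper-triangular matrix is invertible in $M_i(L^\infty(X_i))$. The lower bound $|\lambda|-s$ is uniform in $i$ and there are finitely many summands, so the direct sum is invertible. Therefore $\lambda\notin\sigma(AQ(A,s))$.

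For (ii), fix $\lambda$ with $|\lambda|<s$ and $i\le k-1$. On $X_i$, the ordering gives $|t_{jj}|\ge|t_{i+1,i+1}|>s$ a.e. for every $j\ge i+1$. This covers $X_0$ as well, where $|t_{11}|>s$. So $|t_{jj}-\lambda|\ge s-|\lambda|>0$ a.e., and Lemma \ref{lemma:inverse} again gives invertibility of each trailing block minus $\lambda$. Hence $\sigma((I-Q)A)\subseteq\mathbb{C}\setminus s\mathbb{D}$.

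The main obstacle is making the reduction to direct sums precise. One must check that $Q'\mathcal{M}Q'$ really is the stated direct sum of matrix algebras over the restricted $L^\infty$ spaces, and that inverses computed blockwise reassemble into an element of the compressed algebra. This holds because the $\mathbbm{1}_{X_i}$ are central and the blocks have uniformly bounded inverses.

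If the identification feels too heavy, I will fall back on an explicit construction instead. Let $R_i$ be the inverse from Lemma \ref{lemma:inverse}, embedded in $M_k(L^\infty(X))$ by putting it in the $i\times i$ corner and multiplying by $\mathbbm{1}_{X_i}$. The candidate inverse is $\sum_i \mathbbm{1}_{X_i}R_i$. Its norm is bounded because each $R_i$'s entries are bounded in terms of $\|T\|$ and $(|\lambda|-s)^{-1}$. A direct multiplication then shows it inverts $Q'(T-\lambda)Q'$ within $Q'\mathcal{M}Q'$.

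The case $s=0$ needs no separate treatment: $s\mathbb{D}$ is empty, so (ii) is trivial, and for (i) the argument applies to any $\lambda\neq0$.
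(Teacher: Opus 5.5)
Your proposal is correct and follows essentially the same route as the paper. You conjugate by $U$, split $Q'(A,s)$ and $I-Q'(A,s)$ along the partition $\{X_i\}$ into leading or trailing upper-triangular blocks whose diagonal entries stay uniformly away from $\lambda$ by \eqref{diag ordering}, invert each block via Lemma \ref{lemma:inverse}, and reassemble; the paper writes out only the left-inverse identity, while your direct-sum framing via the central projections $\mathbbm{1}_{X_i}$ makes two-sided invertibility in the corner explicit.
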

\begin{proof}
Let $\lambda \notin \overline{s\mathbb{D}}$. Next,
\begin{align*}
    A Q(A, s) - \lambda Q(A, s) &= U^* T Q'(A, s) U - \lambda U^* Q'(A, s) U \\
    &= U^* \big((T - \lambda) Q'(A, s) \big)U.
\end{align*}
By the definition of $Q'(A,s)$ in \eqref{Qprimedefn}, 
\[
   (T - \lambda) Q'(A, s) = \sum_{i=1}^k (T - \lambda) \mathbbm{1}_{X_i} \cdot \left( \sum_{j=1}^i E_{jj} \right).
\]
The right hand side in the above equation can be written in the matrix form as
\begin{equation}\label{TQ}
    \sum_{i=1}^k \begin{pmatrix} (T-\lambda)^{(i)}|_{X_i} & 0 \\ 0 & 0 \end{pmatrix},
\end{equation}
where for each $1 \le i \le k$, $(T - \lambda)^{(i)}\big|_{X_i} \in M_i\!\left(L^\infty(X_i, \mu|_{X_i})\right)$ denotes the upper-triangular matrix obtained by taking the leading principal $i \times i$ submatrix of $T - \lambda$ and restricting its entries to the measurable subset $X_i \subseteq X$. The diagonal entries of $(T - \lambda)^{(i)}\big|_{X_i}$ are $\big\{ (t_{jj} - \lambda)\big|_{X_i} : 1 \le j \le i \big\}$. By \eqref{diag ordering} and  \eqref{setdefn}, $|t_{jj}| \le s$ \text{ a.e.} for all $1 \le j \le i$. We observe that
\[
\essinf_{x\in X_i} \left| t_{jj} (x)- \lambda \right| \ge \text{dist}(\lambda, \overline{s\mathbb{D}})
\]
for every $1 \le j \le i$. Let $d = \text{dist}(\lambda, \overline{s\mathbb{D}}) > 0$. Since $\essinf_{x\in X_i} \left| t_{jj} (x)- \lambda \right| \ge d > 0$, Lemma \ref{lemma:inverse} implies that $(T - \lambda)^{(i)}\big|_{X_i}$ is invertible in $M_i\!\left(L^\infty(X_i, \mu|_{X_i})\right)$.
Let
\begin{equation}\label{TQinv}
    {(T-\lambda)}^{(inv)} := \sum_{i=1}^k  \begin{pmatrix} \big((T-\lambda)^{(i)}|_{X_i}\big)^{-1} & 0 \\ 0 & 0 \end{pmatrix} \in M_k(L^{\infty}(X, \mu)).
\end{equation}
Then by using \eqref{TQ} and \eqref{TQinv}
\begin{align*}
    {(T-\lambda)}^{(inv)} \big((T - \lambda) Q'(A, s)\big) &= \sum_{i=1}^k \begin{pmatrix} (T-\lambda)^{(i)}|_{X_i} & 0 \\ 0 & 0 \end{pmatrix} \begin{pmatrix} \big((T-\lambda)^{(i,i)}|_{X_i}\big)^{-1} & 0 \\ 0 & 0 \end{pmatrix} \\
    &= \sum_{i=1}^k \begin{pmatrix} I_{i}|_{X_i} & 0 \\ 0 & 0 \end{pmatrix} \\
    &=\sum_{i=1}^k \mathbbm{1}_{X_i} \cdot \left( \sum_{j=1}^i E_{jj} \right) \\
    &= Q'(A, s),
\end{align*}
which implies $A Q(A, s) - \lambda Q(A, s)$ is invertible in $Q(A, s) \mathcal{M} Q(A, s)$.
Therefore, $\lambda \notin \sigma(A Q(A, s))$, proving $\sigma(A Q(A, s)) \subseteq \overline{s\mathbb{D}}$. 
Similarly, let $\lambda' \in {s\mathbb{D}}$. Next,
\begin{align*}
    (I - Q(A, s)) A - \lambda' (I - Q(A, s)) &= U^* (I-Q'(A, s)) T  U - \lambda' U^* (I - Q'(A, s)) U \\
    &= U^* (I - Q'(A, s)) (T - \lambda') U.
\end{align*}
Again by the definition of $Q'(A,s)$ in \eqref{Qprimedefn},
\[
   (1 - Q'(A, s)) (T - \lambda') = \sum_{i=0}^{k-1} \mathbbm{1}_{X_i} \cdot \left( \sum_{j=i+1}^k E_{jj} \right) (T - \lambda').
\]
The right hand side in the above equation can be written in the matrix form as
\begin{equation}\label{TQ1}
    \sum_{i=0}^{k-1} \begin{pmatrix} 0 & 0 \\ 0 & (T-\lambda')_{(k-i)}\big|_{X_i} \end{pmatrix},
\end{equation}
where for each $0 \le i \le k-1$, $(T-\lambda')_{(k-i)}\big|_{X_i} \in M_{k-i}\!\left(L^\infty(X_i, \mu|_{X_i})\right)$ denotes the upper-triangular matrix obtained by taking the trailing principal $(k-i) \times (k-i)$ submatrix of $T - \lambda'$ and restricting its entries to the measurable subset $X_i \subseteq X$. The diagonal entries of $(T-\lambda')_{(k-i)}\big|_{X_i}$ are $\big\{ (t_{jj} - \lambda)\big|_{X_i} : i+1 \le j \le k \big\}$. By \eqref{diag ordering} and  \eqref{setdefn}, $|t_{jj}| > s$ \text{ a.e.}  for all $i+1 \le j \le k$. We observe that
\[
\essinf_{x\in X_i} \left| t_{jj} (x)- \lambda' \right| \ge \text{dist}(\lambda', \mathbb{C} \setminus s\mathbb{D})
\]
for every $i+1 \le j \le k$. Let $\tilde{d} = \text{dist}(\lambda',\mathbb{C} \setminus s\mathbb{D}) > 0$. Since $\essinf_{x\in X_i} \left| t_{jj} (x)- \lambda' \right| \ge \tilde{d} > 0$, Lemma \ref{lemma:inverse} implies that $(T-\lambda')_{(k-i)}\big|_{X_i}$ is invertible in $M_{k-i}\!\left(L^\infty(X_i, \mu|_{X_i})\right)$. Let
\begin{equation}\label{TQinv1}
    {(T-\lambda)}_{(inv)} := \sum_{i=0}^{k-1} \begin{pmatrix}  0 & 0 \\ 0 & (T - \lambda')_{(k-i)}|_{X_i})^{-1} \end{pmatrix} \in M_k(L^{\infty}(X, \mu)).
\end{equation}
Then by using \eqref{TQ1} and \eqref{TQinv1}
\begin{align*}
    {(T-\lambda)}_{(inv)} \big((I - Q'(A, s)) (T - \lambda')\big) &= \sum_{i=0}^{k-1} \begin{pmatrix} 0 & 0 \\ 0 & (T-\lambda')_{(k-i)}\big|_{X_i} \end{pmatrix} \begin{pmatrix}  0 & 0 \\ 0 & (T - \lambda')_{(k-i)}|_{X_i})^{-1} \end{pmatrix} \\
    &= \sum_{i=0}^{k-1} \begin{pmatrix} 0 & 0 \\ 0 & I_{k-i}|_{X_i} \end{pmatrix} \\
    &=\sum_{i=0}^{k-1} \mathbbm{1}_{X_i} \cdot \left( \sum_{j=i+1}^k E_{jj} \right) \\
    &= I - Q'(A, s),
\end{align*}
which implies $ (I - Q(A, s)) A - \lambda' (I - Q(A, s))$ is invertible in $(I-Q(A, s)) \mathcal{M} (I-Q(A, s))$. Therefore, $\lambda' \notin \sigma((I - Q(A, s)) A)$, proving $\sigma((I - Q(A, s)) A) \subseteq \mathbb{C} \setminus s\mathbb{D}$. \\
\end{proof}
Let $P(A, \overline{s\mathbb{D}})$ denote the Haagerup-Schultz projection of $A$ onto the closed disk $\overline{s\mathbb{D}}$. We now compare $Q(A, s)$ directly to the Haagerup–Schultz projection $P(A, \overline{s \mathbb{D}})$.
\begin{prop}\label{comparison proj}
For any $s > 0$ and $\varepsilon > 0$, the following comparison holds:
\begin{equation*}
    Q(A, s) \le P(A, \overline{s\mathbb{D}}) \le Q(A, s+\varepsilon).
\end{equation*}
\end{prop}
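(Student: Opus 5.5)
We use the characterization of Haagerup--Schultz projections in Proposition~\ref{HS E and F result}: $P(A,\overline{s\mathbb{D}})$ is the projection onto $E(A,s)$, the largest closed $A$-invariant subspace affiliated with $\mathcal{M}$ whose restriction has Brown measure supported in $\overline{s\mathbb{D}}$. We also use the relation between Brown measure and spectrum: for $T$ in a finite von Neumann algebra, the support of $\nu_T$ lies in $\sigma(T)$. Here $\mathcal{M}\cong M_k(L^\infty(X,\mu))$ carries the faithful normal tracial state $\tau=\int_X \tfrac1k\mathrm{Tr}(\cdot)\,d\mu$, and its corners $Q\mathcal{M}Q$ carry the normalized restriction.

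\emph{First inequality.} By Lemma~\ref{inv proj family}, $Q(A,s)$ is $A$-invariant. By Proposition~\ref{spectrum 1}(i), $\sigma(AQ(A,s))\subseteq\overline{s\mathbb{D}}$ computed in $Q(A,s)\mathcal{M}Q(A,s)$, so the Brown measure of $A|_{Q(A,s)}$ is concentrated on $\overline{s\mathbb{D}}$. The maximality in Theorem~\ref{HS main thm} then gives $Q(A,s)\le P(A,\overline{s\mathbb{D}})$.

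\emph{Second inequality.} We argue dually through the complement. Put $D=(s+\varepsilon)\mathbb{D}$. Since $Q(A,s+\varepsilon)$ is $A$-invariant, $1-Q(A,s+\varepsilon)$ is $A^*$-invariant. Its compression $(1-Q)A^*(1-Q)=((1-Q)A(1-Q))^*$ has spectrum contained in the conjugate of $\sigma((1-Q)A)\subseteq\mathbb{C}\setminus D$, by Proposition~\ref{spectrum 1}(ii). Hence the Brown measure of $A^*$ restricted to $1-Q(A,s+\varepsilon)$ is concentrated on $\mathbb{C}\setminus D$, since $D$ is conjugation-symmetric. Maximality gives $1-Q(A,s+\varepsilon)\le P(A^*,\mathbb{C}\setminus D)$. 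Lemma~\ref{lem:basicHS}(iv) gives $P(A^*,\mathbb{C}\setminus D)=1-P(A,D)$, so $P(A,D)\le Q(A,s+\varepsilon)$.

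It remains to show $P(A,\overline{s\mathbb{D}})\le P(A,D)$. This holds by monotonicity of $B\mapsto P(A,B)$, since $\overline{s\mathbb{D}}\subseteq D$ and the subspace for the smaller set qualifies for the larger set by maximality. This is where the $\varepsilon$ is used: Proposition~\ref{spectrum 1}(ii) only excludes the open disk, so we must pass from the open disk $D$ to the closed disk $\overline{s\mathbb{D}}$.

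\emph{Main obstacle.} The delicate point is the passage from a spectral inclusion for a compression in a corner algebra to a Brown-measure support statement for $\nu$ computed with the renormalized trace. One needs that the support of the Brown measure lies in the spectrum in the corner algebra, which is Brown's original result applied to the finite algebra $Q\mathcal{M}Q$. The degenerate case where $\tau(Q)=0$ (the zero projection) must be handled trivially.

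An alternative, if the maximality route is awkward, is to use the trace identity $\tau(P(A,B))=\nu_A(B)$. Compute $\nu_A$ from the triangular form, where the Brown measure is the average of the pushforwards of the diagonal $t_{jj}$. This yields $\tau(Q(A,s))=\nu_A(\overline{s\mathbb{D}})$, and combined with the first inequality and faithfulness of $\tau$ it would even give equality.
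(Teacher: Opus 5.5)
Your proof is correct, but it uses different machinery from the paper's.

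\textbf{The paper's route.} The paper works at the level of vectors. For $\xi\in Q(A,s)\mathcal{H}$, the spectral radius formula gives $\limsup_n\|A^n\xi\|^{1/n}\le s$. So $\xi\in E(A,s)=P(A,\overline{s\mathbb{D}})\mathcal{H}$ by Proposition~\ref{HS E and F result}. For the upper bound it sets $\eta_n=(A^*|_{\mathcal{H}_1})^{-n}\eta$ on $\mathcal{H}_1=(I-Q(A,s+\varepsilon))\mathcal{H}$. This puts $\eta\in F(A^*,s+\varepsilon)$, and it then applies the orthogonality $E(A,s)\perp F(A^*,s+\varepsilon)$ from Lemma~\ref{HS E and F relation}.

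\textbf{Your route.} You convert the spectral inclusions of Proposition~\ref{spectrum 1} into Brown-measure statements, using Brown's theorem $\operatorname{supp}\nu_T\subseteq\sigma(T)$ in the corner algebras. You then use maximality in Theorem~\ref{HS main thm}, Lemma~\ref{lem:basicHS}(iv), and monotonicity of $B\mapsto P(A,B)$.

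\textbf{Comparison.} The two proofs are structurally parallel. Your combination of (iv) with monotonicity does exactly the job of the orthogonality lemma. The $\varepsilon$ enters at the same place, the gap between $\overline{s\mathbb{D}}$ and $(s+\varepsilon)\mathbb{D}$. The paper's argument needs only the spectral radius formula and the trace-free $E/F$ descriptions, with no facts about Brown measures of compressions. Yours is shorter and avoids building the sequences. Its first half also works verbatim for any $A$-invariant projection whose compressed spectrum lies in an arbitrary Borel set $B$, not just a disk.

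\textbf{Your closing trace remark.} This is worth pursuing, because it proves more than the statement. For the trace $\tau=\int_X\frac{1}{k}\mathrm{Tr}(\cdot)\,d\mu$, one has $\log\Delta(T-\lambda)=\frac{1}{k}\sum_{j}\int_X\log|t_{jj}-\lambda|\,d\mu$. Hence $\nu_A=\frac{1}{k}\sum_j (t_{jj})_*\mu$. On $X_i$, exactly $i$ diagonal entries have modulus at most $s$. Using Theorem~\ref{HS main thm}(i), this gives $\tau(Q(A,s))=\frac{1}{k}\sum_{i} i\,\mu(X_i)=\nu_A(\overline{s\mathbb{D}})=\tau(P(A,\overline{s\mathbb{D}}))$. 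Together with your first inequality and faithfulness of $\tau$, this yields the equality $Q(A,s)=P(A,\overline{s\mathbb{D}})$. That would make both the $\varepsilon$ here and the limiting step in Corollary~\ref{cor main} unnecessary.
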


\begin{proof}
By Proposition \ref{spectrum 1}, the restriction of $A$ to the invariant subspace $\mathcal{H}_0 =  Q(A, s)\mathcal{H}$, denoted by $A|_{\mathcal{H}_0}$, satisfies $\sigma(A|_{\mathcal{H}_0}) \subseteq  \overline {s\mathbb{D}}$. The spectral radius formula then implies
\begin{equation*}
    \lim_{n \to \infty} \|(A|_{\mathcal{H}_0})^n\|^{1/n} = \lim_{n \to \infty} \|(A^n|_{\mathcal{H}_0})\|^{1/n}\le s.
\end{equation*}
For any non-zero $\xi \in {\mathcal{H}_0} = Q(A, s)\mathcal{H}$, since $\lim_{n \to \infty} \|\xi\|^{1/n} = 1$, we obtain
\begin{equation*}
\begin{aligned}
\limsup_{n \to \infty} \|A^n \xi\|^{1/n}
&\le \Big( \lim_{n \to \infty}  \|(A^n|_{\mathcal{H}_0})\|^{1/n} \Big)
\Big( \lim_{n \to \infty} \|\xi\|^{1/n} \Big)\\
&\le s .
\end{aligned}
\end{equation*}
By Definition \ref{HS E and F definition}$(i)$ and Lemma \ref{HS E and F result}, it follows that $\xi \in P(A, \overline{s\mathbb{D}})\mathcal{H}$, which proves the first inequality $Q(A, s) \le P(A, \overline{s\mathbb{D}})$. 

Next, consider the invariant subspace $\mathcal{H}_1 = (I - Q(A, s + \varepsilon))\mathcal{H}$ of $A^*$. Proposition \ref{spectrum 1} and taking the adjoint implies that the restriction $A^*|_{\mathcal{H}_1}$ satisfies $\sigma(A^*|_{\mathcal{H}_1}) \subseteq \mathbb{C} \setminus (s + \varepsilon)\mathbb{D}$.
By the Spectral Mapping Theorem for inverses,
\[
\sigma\left((A^*|_{\mathcal{H}_1})^{-1}\right) \subseteq \frac{1}{s + \varepsilon}\overline{\mathbb{D}}.
\]
Applying the spectral radius formula yields
\[
\lim_{n \to \infty} \left\| (A^*|_{\mathcal{H}_1})^{-n} \right\|^{\frac{1}{n}} \le \frac{1}{s + \varepsilon}.
\]
For any $\eta \in {\mathcal{H}_1}=(I - Q(A, s+\varepsilon))\mathcal{H}$ with
$ \eta_n := (A^*|_{\mathcal{H}_1})^{-n} \eta $, the norm estimate gives
\[
\begin{aligned}
\limsup_{n \to \infty} \|\eta_n\|^{\frac{1}{n}} &\le \Big(\lim_{n \to \infty} \left\| (A^*|_{\mathcal{H}_1})^{-n} \right\|^{\frac{1}{n}} \Big) \Big( \lim_{n \to \infty}  \|\eta\|^{\frac{1}{n}} \Big) \\
&\le \frac{1}{(s+\varepsilon)}.
\end{aligned}
\]
Definition \ref{HS E and F definition} then gives $\eta \in F(A^*,s + \varepsilon)$.~ Lemma \ref{HS E and F relation} implies $E(A, s) \perp F(A^*, s+ \varepsilon)$, so $\eta \in E(A, s)^{\perp}=(I - P(A,  \overline{s\mathbb{D}}))\mathcal{H}$. This establishes $P(A, \overline{s \mathbb{D}}) \le Q(A, s+\varepsilon)$, completing the proof.
\end{proof}
By taking the limit as $\varepsilon \to 0^+$ and applying the comparison inequalities established in the above Proposition, we transfer the spectral bounds directly to the Haagerup--Schultz projection $P(A, s\overline{\mathbb{D}})$.
\begin{cor} \label{cor main}
    For any $s > 0$:
\begin{enumerate}[label=(\roman*),leftmargin=25pt]
    \item $\sigma\big(A \, P(A, \overline{s\mathbb{D}})\big) \subseteq \overline{s\mathbb{D}}$,
    \item  $\sigma((I -  P(A, \overline{s\mathbb{D}}) A) \subseteq \mathbb{C} \setminus s\mathbb{D}$,
\end{enumerate}
where the spectra of $A \,  P(A, \overline{s\mathbb{D}})$ and $(I -  P(A, \overline{s\mathbb{D}})) A$ are considered as elements of the algebras $ P(A, \overline{s\mathbb{D}}) \mathcal{M}  P(A, \overline{s\mathbb{D}})$ and $ (I- P(A, \overline{s\mathbb{D}})) \mathcal{M} (I- P(A, \overline{s\mathbb{D}}))$, respectively.
\end{cor}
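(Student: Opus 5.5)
We aim to prove Corollary~\ref{cor main} by sandwiching $P := P(A,\overline{s\mathbb{D}})$ between the projections $Q(A,s)$ and $Q(A,s+\varepsilon)$. We then use Lemma~\ref{p,q spectrum proj} to push the spectral bounds of Proposition~\ref{spectrum 1} onto $P$. By Proposition~\ref{HS E and F result} and Theorem~\ref{HS main thm}, $P$ is $A$-invariant, and by Lemma~\ref{inv proj family} so are $Q(A,s)$ and $Q(A,s+\varepsilon)$. Proposition~\ref{comparison proj} gives $Q(A,s)\le P\le Q(A,s+\varepsilon)$. So all three are invariant projections for $A$, and Lemma~\ref{p,q spectrum proj} applies to each comparable pair.

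For (i), apply Lemma~\ref{p,q spectrum proj} with $p=P$ and $q=Q(A,s+\varepsilon)$. This gives $\sigma(AP)\subseteq\sigma(AQ(A,s+\varepsilon))$, and by Proposition~\ref{spectrum 1}(i) the right-hand side lies in $\overline{(s+\varepsilon)\mathbb{D}}$. Since $\varepsilon>0$ is arbitrary and the left-hand side does not depend on $\varepsilon$, we get $\sigma(AP)\subseteq\bigcap_{\varepsilon>0}\overline{(s+\varepsilon)\mathbb{D}}=\overline{s\mathbb{D}}$.

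For (ii), apply Lemma~\ref{p,q spectrum proj} with $p=Q(A,s)$ and $q=P$. This gives $\sigma((I-P)A)\subseteq\sigma((I-Q(A,s))A)$, which by Proposition~\ref{spectrum 1}(ii) is contained in $\mathbb{C}\setminus s\mathbb{D}$. No limit in $\varepsilon$ is needed here.

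The one point to check is that Lemma~\ref{p,q spectrum proj} really compares spectra computed in the different corner algebras $p\mathcal{M}p$ and $q\mathcal{M}q$. The lemma's proof does exactly this: a left inverse in $q\mathcal{M}q$ compresses to a left inverse in $p\mathcal{M}p$. To upgrade this to two-sided invertibility I would use that $p\mathcal{M}p$ is a finite von Neumann algebra (a corner of a type $\mathrm{I}_k$ algebra with $k$ finite). In a finite algebra a one-sided inverse is automatically two-sided, so the argument is sound as used. Apart from this, the proof is a direct combination of the earlier results, and the main (mild) obstacle is only this bookkeeping of which corner each spectrum lives in.
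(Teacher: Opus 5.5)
Your proposal is correct and follows the paper's proof exactly. You sandwich $P(A,\overline{s\mathbb{D}})$ using Proposition~\ref{comparison proj} and invoke Proposition~\ref{spectrum 1}. Then you apply Lemma~\ref{p,q spectrum proj} with $p=P(A,\overline{s\mathbb{D}})$, $q=Q(A,s+\varepsilon)$ and let $\varepsilon\to0^+$ for (i), and with $p=Q(A,s)$, $q=P(A,\overline{s\mathbb{D}})$ for (ii).

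Your finiteness remark is a worthwhile supplement, not mere bookkeeping. The paper's proof of Lemma~\ref{p,q spectrum proj} only produces a one-sided inverse in $p\mathcal{M}p$. Upgrading it to a two-sided inverse genuinely requires finiteness of $p\mathcal{M}p$, which holds here since $k<\infty$. Without finiteness the inclusion fails: the unilateral shift is the restriction of the bilateral shift on $\ell^2(\mathbb{Z})$ to an invariant subspace, yet its spectrum $\overline{\mathbb{D}}$ is not contained in $\mathbb{T}$.
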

\begin{proof}
    Combining Lemma \ref{p,q spectrum proj}, Proposition \ref{spectrum 1}$(i)$, and Proposition \ref{comparison proj}, we see that for any $s > 0$ and $\varepsilon > 0$,
\[
\sigma\big(A \, P(A, s\overline{\mathbb{D}})\big) \subseteq \sigma\big(A \, Q(A, s+\varepsilon)\big) \subseteq \overline{(s+\varepsilon)\mathbb{D}}.
\]
Letting $\varepsilon \rightarrow 0^+$, we obtain that $\sigma\big(A P(A, \overline{s \mathbb{D}})\big) \subseteq \overline{s \mathbb{D}}$.
Similarly, using Lemma \ref{p,q spectrum proj}, Proposition \ref{spectrum 1}$(ii)$, and Proposition \ref{comparison proj}, for any $s > 0$,
\[
\sigma\big((I - P(A, \overline{s \mathbb{D}}))A\big) \subseteq \sigma\big((I - Q(A, s))A\big) \subseteq \mathbb{C} \setminus s\mathbb{D}.
\]
\end{proof}
Furthermore, we establish analogous results for the Haagerup-Schultz projections of $A$ associated with $\mathbb{C} \setminus s\mathbb{D}$ for every $s \ge 0$. We consider the matrix representation of $A$ from Lemma \ref{inc order in two spaces} and Corollary \ref{cor: re-ord}, that there exists a unitary $ \widetilde{U} \in \mathcal{M}$ such that
\begin{equation*}
    A =  \widetilde{U}^* \widetilde{T} \widetilde{U},
\end{equation*}
where $\widetilde{T}$ is a $k \times k$ upper triangular matrix with entries in $L^{\infty}(X, \mu)$ such that the diagonal entries are non-decreasing almost everywhere, i.e.
\begin{equation*}\label{eq:1st triang rep&}
    T = (\widetilde{t}_{ij})_{1 \le i, j \le k} \quad \text{where} \quad \widetilde{t}_{ij} = 0 \text{ if } i > j
\end{equation*}
and 
\begin{equation*}\label{diag ordering&}
     \lvert \widetilde{t}_{11} \rvert \ge \lvert \widetilde{t}_{22} \rvert \ge \dots \ge \rvert \widetilde{t}_{kk} \rvert \text{ a.e.}
\end{equation*}
Fix $s \in [0,\infty)$, we again partition $X$ into $(k+1)$ measurable sets $\widetilde{X_i}$ for $0 \le i \le k$. For $1 \le i \le k-1$ we define 
\begin{equation}\label{setdefn&}
\widetilde{X}_i := \left\{ x \in X \;\middle|\; \lvert \widetilde{t}_{i,i}(x) \rvert > s \geq \lvert \widetilde{t}_{i+1,i+1}(x) \rvert \right\},
\end{equation}
for $i=0$,
\[
\widetilde{X}_0 := \{x \in X \mid s \ge |\widetilde{t}_{11}(x)|\}
\]
and for $i=k$,
\[
\widetilde{X}_k := \{x \in X \mid |\widetilde{t}_{kk}(x)| > s\}.
\]
Note that this partition as well depends on the operator $A$ and a scalar $s \ge 0$. We define the associated projections
\begin{equation*}\label{Qprimedefn&}
    \begin{aligned}
\widetilde{Q}'(A, s) &= \sum_{i=0}^{k} \mathbbm{1}_{\widetilde{X}_i} \cdot \left( \sum_{j=1}^{i} E_{jj} \right)\\
    &= \begin{pmatrix}
\mathbbm{1}_{\sqcup_{i=1}^{k} \widetilde{X}_i} & & & 0 \\
& \mathbbm{1}_{\sqcup_{i=2}^{k} \widetilde{X}_i} & & \\
& & \ddots & \\
0 & & & \mathbbm{1}_{\widetilde{X}_k}
\end{pmatrix}.
\end{aligned}
\end{equation*}
Since $\widetilde{X}_i$ is measurable for all $0 \leq i \leq k$, it follows that $\widetilde{Q}'(A, s) \in \mathcal{M}$. 
We define
\begin{equation*}
    \widetilde{Q}(A, s) =  \widetilde{U}^* \widetilde{Q}'(A, s)  \widetilde{U} \in \mathcal{M},
\end{equation*}
and establish that these projections are $A$-invariant.
\begin{lemma}\label{inv proj family&}
    For every $s \ge 0$, $\widetilde{Q}(A, s)$ is an invariant projection of $A \in \mathcal{M}$. Consequently, $I - \widetilde{Q}(A, s)$ is an invariant projection of $A^* \in \mathcal{M}$.
\end{lemma}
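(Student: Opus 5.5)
The argument is the one used for Lemma \ref{inv proj family}: the claim reduces to a statement about diagonal projections in $M_k(L^\infty(X,\mu))$ and the upper-triangularity of $\widetilde{T}$. I would first check that $\widetilde{Q}'(A,s)$ is a projection whose diagonal is nested in the right direction. By the non-increasing ordering $|\widetilde{t}_{11}|\ge\dots\ge|\widetilde{t}_{kk}|$ a.e., the sets $\widetilde{X}_0,\dots,\widetilde{X}_k$ partition $X$ up to null sets: $\widetilde{X}_i$ is where exactly the first $i$ diagonal entries exceed $s$ in modulus. Hence $\widetilde{Q}'(A,s)=\operatorname{diag}(q_1,\dots,q_k)$ with $q_j=\mathbbm{1}_{\sqcup_{i=j}^k\widetilde{X}_i}$, and $I-\widetilde{Q}'(A,s)=\operatorname{diag}(\mathbbm{1}_{\sqcup_{i=0}^{j-1}\widetilde{X}_i})_{j}$. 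Because the set $\sqcup_{i\ge j}\widetilde X_i$ shrinks as $j$ increases, the diagonal entries of $\widetilde{Q}'(A,s)$ are non-increasing in $j$, which is what upper-triangular invariance requires.

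Next I would compute the $(j,l)$ entry of $(I-\widetilde{Q}'(A,s))\widetilde{T}\widetilde{Q}'(A,s)$. It equals $\mathbbm{1}_{\sqcup_{i=0}^{j-1}\widetilde{X}_i}\,\widetilde{t}_{jl}\,\mathbbm{1}_{\sqcup_{i=l}^{k}\widetilde{X}_i}$.
\begin{itemize}
\item If $j>l$, the entry vanishes because $\widetilde{t}_{jl}=0$ by upper-triangularity.
\item If $j\le l$, the index ranges $\{0,\dots,j-1\}$ and $\{l,\dots,k\}$ are disjoint, so the two indicator functions have disjoint supports (the $\widetilde{X}_i$ being pairwise disjoint) and the product is $0$.
\end{itemize}
Hence $\widetilde{T}\widetilde{Q}'(A,s)=\widetilde{Q}'(A,s)\widetilde{T}\widetilde{Q}'(A,s)$.

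Conjugating by $\widetilde{U}$ then gives
\[
A\widetilde{Q}(A,s)=\widetilde{U}^*\widetilde{T}\widetilde{Q}'(A,s)\widetilde{U}=\widetilde{U}^*\widetilde{Q}'(A,s)\widetilde{T}\widetilde{Q}'(A,s)\widetilde{U}=\widetilde{Q}(A,s)A\widetilde{Q}(A,s),
\]
so $\widetilde{Q}(A,s)$ is $A$-invariant. Taking adjoints yields $(I-\widetilde{Q}(A,s))A^*=(I-\widetilde{Q}(A,s))A^*(I-\widetilde{Q}(A,s))$, which is exactly $A^*$-invariance of $I-\widetilde{Q}(A,s)$.

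The main obstacle is bookkeeping rather than analysis. The displayed definition of $\widetilde{Q}'$ sums over $i$ from $0$, but the $i=0$ term contributes nothing, so the matrix form stated in the paper is correct. One must also confirm that the $\widetilde{X}_i$ genuinely partition $X$ a.e.; this uses the a.e. monotone ordering of $|\widetilde{t}_{jj}|$, transferred from $C(S)$ via Lemma \ref{inc order in two spaces}.
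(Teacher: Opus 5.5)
Your proof is correct and is the paper's argument: the paper just says to repeat the proof of Lemma~\ref{inv proj family} with $\widetilde{Q}'$, $\widetilde{Q}$, $\widetilde{U}$, $\widetilde{T}$ in place of $Q'$, $Q$, $U$, $T$, which is exactly your entrywise computation of $(I-\widetilde{Q}')\widetilde{T}\widetilde{Q}'$. One small slip, which the paper's proof of Lemma~\ref{inv proj family} also makes: writing $\widetilde{Q}=\widetilde{Q}(A,s)$, the adjoint of $A\widetilde{Q}=\widetilde{Q}A\widetilde{Q}$ is $\widetilde{Q}A^*=\widetilde{Q}A^*\widetilde{Q}$, i.e.\ $A^*(I-\widetilde{Q})=(I-\widetilde{Q})A^*(I-\widetilde{Q})$, and that is the $A^*$-invariance of $I-\widetilde{Q}$; the identity $(I-\widetilde{Q})A^*=(I-\widetilde{Q})A^*(I-\widetilde{Q})$ that you display is instead equivalent to $\widetilde{Q}A(I-\widetilde{Q})=0$, which fails in general.
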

The proof proceeds analogously to that of Lemma \ref{inv proj family} by substituting $Q'(A,s), \, Q(A,s), \, U,$ and $T$ for $\widetilde{Q}'(A, s), \, \widetilde{Q}(A, s), \, \widetilde{U},$ and $\widetilde{T}$, respectively.\\

The following proposition establishes that the constructed projection $\widetilde{Q}(A,s)$ restricts $A$ to an invariant subspace where its spectrum is localized outside the open disk $s{\mathbb{D}}$.
\begin{prop}\label{spectrum 1&}
For any $s \geq 0$, the following inclusions hold:
\begin{enumerate}[label=(\roman*),leftmargin=25pt]
    \item $\sigma\big(A \, \widetilde{Q}(A, s) \big) \subseteq \mathbb{C} \setminus s\mathbb{D}$,
    \item $\sigma\big((I - \widetilde{Q}(A, s))A\big) \subseteq \overline{s\mathbb{D}}$,
\end{enumerate}
where the spectra of $A \widetilde{Q}(A, s)$ and $(I - \widetilde{Q}(A, s)) A$ are considered as elements of the algebras $\widetilde{Q}(A, s) \mathcal{M} \widetilde{Q}(A, s)$ and $(I-\widetilde{Q}(A, s)) \mathcal{M} (I-\widetilde{Q}(A, s))$, respectively.
\end{prop}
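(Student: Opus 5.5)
The plan mirrors the proof of Proposition \ref{spectrum 1}, with the roles of the leading and trailing blocks adapted to the non-increasing ordering of the diagonal of $\widetilde{T}$. First, I will write $\widetilde{Q}'(A,s)$ in block form. On $\widetilde{X}_i$ it is the projection $\sum_{j=1}^i E_{jj}$ onto the first $i$ coordinates. By the ordering $|\widetilde t_{11}|\ge\cdots\ge|\widetilde t_{kk}|$ and the definition \eqref{setdefn&}, on $\widetilde{X}_i$ we have $|\widetilde t_{jj}|>s$ a.e. for $1\le j\le i$ and $|\widetilde t_{jj}|\le s$ a.e. for $i+1\le j\le k$. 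Since $A\widetilde{Q}(A,s)-\lambda\widetilde{Q}(A,s)=\widetilde{U}^*\big((\widetilde T-\lambda)\widetilde{Q}'(A,s)\big)\widetilde{U}$, and $\widetilde{Q}'$ is $\widetilde T$-invariant by Lemma \ref{inv proj family&}, we get
\[
(\widetilde T-\lambda)\widetilde{Q}'(A,s)=\sum_{i=1}^k\begin{pmatrix}(\widetilde T-\lambda)^{(i)}|_{\widetilde X_i}&0\\0&0\end{pmatrix}.
\]

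For (i), fix $\lambda\in s\mathbb{D}$ and set $d=\operatorname{dist}(\lambda,\mathbb{C}\setminus s\mathbb{D})=s-|\lambda|>0$. For $j\le i$ we have $|\widetilde t_{jj}(x)-\lambda|\ge |\widetilde t_{jj}(x)|-|\lambda|> s-|\lambda|=d$ a.e. on $\widetilde X_i$, so $\essinf_{\widetilde X_i}|\widetilde t_{jj}-\lambda|\ge d$. Lemma \ref{lemma:inverse} then inverts each upper-triangular block $(\widetilde T-\lambda)^{(i)}|_{\widetilde X_i}$. Summing the inverses over $i$, exactly as in \eqref{TQinv}, gives an element of $\widetilde{Q}'\mathcal{M}\widetilde{Q}'$ that is a two-sided inverse of $(\widetilde T-\lambda)\widetilde{Q}'$. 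Two-sidedness holds blockwise because the blocks live on disjoint sets $\widetilde X_i$. Conjugating by $\widetilde U$ shows $\lambda\notin\sigma(A\widetilde Q(A,s))$. Note that $\lambda$ ranges over the open disk, which is exactly what the strict inequality $|\widetilde t_{jj}|>s$ supports.

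For (ii), I will use $(I-\widetilde Q'(A,s))(\widetilde T-\lambda')=\sum_{i=0}^{k-1}\begin{pmatrix}0&0\\0&(\widetilde T-\lambda')_{(k-i)}|_{\widetilde X_i}\end{pmatrix}$. This identity follows because $I-\widetilde Q'$ is invariant for $\widetilde T^*$, so $(I-\widetilde Q')\widetilde T=(I-\widetilde Q')\widetilde T(I-\widetilde Q')$. Take $\lambda'\notin\overline{s\mathbb{D}}$ and set $\tilde d=|\lambda'|-s>0$. On $\widetilde X_i$, for $j\ge i+1$, we have $|\widetilde t_{jj}-\lambda'|\ge|\lambda'|-s=\tilde d$. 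Lemma \ref{lemma:inverse} again inverts each trailing block, and assembling the inverses as in \eqref{TQinv1} shows that $(I-\widetilde Q(A,s))A-\lambda'(I-\widetilde Q(A,s))$ is invertible in the corner algebra.

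The only real care point is bookkeeping, since $\widetilde Q'$ as written sums from $i=0$ with an empty inner sum for $i=0$. I need to check that the partition $\{\widetilde X_i\}$ indeed covers $X$ a.e. and that the inequalities on each piece point the right way under the reversed ordering. I also need to verify that the blockwise inverses are both left and right inverses, not merely left inverses as displayed in the earlier proof. This holds because each block is an honest invertible matrix over $L^\infty(\widetilde X_i)$, so the assembled inverse is two-sided in the corner.
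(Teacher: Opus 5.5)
Your proposal is correct and follows essentially the same route as the paper: you block-decompose $(\widetilde T-\lambda)\widetilde Q'(A,s)$ and $(I-\widetilde Q'(A,s))(\widetilde T-\lambda')$ over the sets $\widetilde X_i$ into leading (resp.\ trailing) upper-triangular blocks whose diagonal entries stay uniformly away from $\lambda$ (resp.\ $\lambda'$), invert each block via Lemma~\ref{lemma:inverse}, and reassemble the inverses in the corner algebra. Your explicit checks that the $\widetilde X_i$ partition $X$ a.e.\ and that the assembled inverse is two-sided fill in bookkeeping that the paper's displayed computation leaves implicit.
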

\begin{proof}
The proof of this proposition follows a similar argument to the Proposition \ref{spectrum 1}. Let $\lambda \in {s\mathbb{D}}$. Next,
\begin{equation*}
    \begin{aligned}
    A \widetilde{Q}(A, s) - \lambda \widetilde{Q}(A, s) &= \widetilde{U}^* \widetilde{T} \widetilde{Q}'(A, s)  \widetilde{U} - \lambda  \widetilde{U}^* \widetilde{Q}'(A, s)  \widetilde{U} \\
    &=  \widetilde{U}^* \big((\widetilde{T} - \lambda) \widetilde{Q}'(A, s) \big) \widetilde{U}\\
    &= \widetilde{U}^* \Big( \sum_{i=1}^k (\widetilde{T} - \lambda) \mathbbm{1}_{\widetilde{X}_i} \cdot \big( \sum_{j=1}^i E_{jj} \big) \Big) \widetilde{U}
\end{aligned}
\end{equation*}
The last in the above equation can be written in the matrix form as
\begin{equation}\label{TQ&}
    \widetilde{U}^* \Bigg(\sum_{i=1}^k \begin{pmatrix} (\widetilde{T}-\lambda)^{(i)}|_{\widetilde{X}_i} & 0 \\ 0 & 0 \end{pmatrix} \Bigg) \widetilde{U},
\end{equation}
where for each $1 \le i \le k$, $(\widetilde{T} - \lambda)^{(i)}\big|_{\widetilde{X}_i} \in M_i\!\left(L^\infty(\widetilde{X}_i, \mu|_{\widetilde{X}_i})\right)$ denotes the upper-triangular matrix obtained by taking the leading principal $i \times i$ submatrix of $\widetilde{T} - \lambda$ and restricting its entries to the measurable subset $\widetilde{X}_i \subseteq X$. The diagonal entries of $(\widetilde{T} - \lambda)^{(i)}\big|_{\widetilde{X}_i}$ are $\big\{ (\widetilde{t}_{jj} - \lambda)\big|_{\widetilde{X}_i} : 1 \le j \le i \big\}$.By \eqref{diag ordering&} and  \eqref{setdefn&}, $|\widetilde{t}_{jj}| > s$ \text{ a.e.} for all $1 \le j \le i$. We observe that
\[
\essinf_{x\in \widetilde{X}_i} \left| \widetilde{t}_{jj} (x)- \lambda \right| \ge \text{dist}(\lambda, \mathbb{C} \setminus s\mathbb{D})
\]
for every $1 \le j \le i$. Let $d = \text{dist}(\lambda, \mathbb{C} \setminus s\mathbb{D}) > 0$. Since $\essinf_{x\in \widetilde{X}_i} \left| \widetilde{t}_{jj} (x)- \lambda \right| \ge d > 0$, Lemma \ref{lemma:inverse} implies that $(\widetilde{T}- \lambda)^{(i)}\big|_{\widetilde{X}_i}$ is invertible in $M_i\!\left(L^\infty(\widetilde{X}_i, \mu|_{\widetilde{X}_i})\right)$.
Let
\begin{equation}\label{TQinv&}
    {(\widetilde{T}-\lambda)}^{(inv)} := \sum_{i=1}^k  \begin{pmatrix} \big((\widetilde{T}-\lambda)^{(i)}|_{\widetilde{X}_i}\big)^{-1} & 0 \\ 0 & 0 \end{pmatrix} \in M_k(L^{\infty}(X, \mu)).
\end{equation}
Then by using \eqref{TQ&} and \eqref{TQinv&}
\begin{align*}
    {(\widetilde{T}-\lambda)}^{(inv)}  \big((\widetilde{T} - \lambda) \widetilde{Q}'(A, s)\big) &= \sum_{i=1}^k \begin{pmatrix} (\widetilde{T}-\lambda)^{(i)}|_{\widetilde{X}_i} & 0 \\ 0 & 0 \end{pmatrix} \begin{pmatrix} \big((\widetilde{T}-\lambda)^{(i)}|_{\widetilde{X}_i}\big)^{-1} & 0 \\ 0 & 0 \end{pmatrix} \\
    &= \sum_{i=1}^k \begin{pmatrix} I_{i}|_{\widetilde{X}_i} & 0 \\ 0 & 0 \end{pmatrix} \\
    &=\sum_{i=1}^k \mathbbm{1}_{\widetilde{X}_i} \cdot \left( \sum_{j=1}^i E_{jj} \right) \\
    &= \widetilde{Q}'(A, s),
\end{align*}
which implies $A \widetilde{Q}(A, s) - \lambda \widetilde{Q}(A, s)$ is invertible in $\widetilde{Q}(A, s) \mathcal{M} \widetilde{Q}(A, s)$.
Therefore, $\lambda \notin \sigma(A \widetilde{Q}(A, s))$, proving $\sigma(A \widetilde{Q}(A, s)) \subseteq \mathbb{C} \setminus s\mathbb{D}$. 
Similarly, let $\lambda' \notin \overline{s\mathbb{D}}$. Next,
\begin{equation*}
    \begin{aligned}
    (I - \widetilde{Q}(A, s)) A - \lambda' (I - \widetilde{Q}(A, s)) &=  \widetilde{U}^* (I-\widetilde{Q}'(A, s)) \widetilde{T}   \widetilde{U} - \lambda'  \widetilde{U}^* (I - \widetilde{Q}'(A, s)) \widetilde{U} \\
    &=  \widetilde{U}^* (I - \widetilde{Q}'(A, s)) (\widetilde{T} - \lambda')  \widetilde{U}\\
    &=\widetilde{U}^* \Bigg( \sum_{i=0}^{k-1} \mathbbm{1}_{\widetilde{X}_i} \cdot \left( \sum_{j=i+1}^k E_{jj} \right) (\widetilde{T} - \lambda') \Bigg) \widetilde{U}^*
\end{aligned}
\end{equation*}
The last line in the above equation can be written in the matrix form as
\begin{equation*}
    \widetilde{U}^* \Bigg(\sum_{i=0}^{k-1} \begin{pmatrix} 0 & 0 \\ 0 & (\widetilde{T}-\lambda')_{(k-i)}\big|_{\widetilde{X}_i} \end{pmatrix} \Bigg) \widetilde{U},
\end{equation*}
where for each $0 \le i \le k-1$, $(\widetilde{T}-\lambda')_{(k-i)}\big|_{\widetilde{X}_i} \in M_{k-i}\!\left(L^\infty(\widetilde{X}_i, \mu|_{\widetilde{X}_i})\right)$ denotes the upper-triangular matrix obtained by taking the trailing principal $(k-i) \times (k-i)$ submatrix of $\widetilde{T} - \lambda'$ and restricting its entries to the measurable subset $\widetilde{X}_i \subseteq X$. The diagonal entries of $(\widetilde{T}-\lambda')_{(k-i)}\big|_{\widetilde{X}_i}$ are $\big\{ (\widetilde{t}_{jj} - \lambda)\big|_{\widetilde{X}_i} : i+1 \le j \le k \big\}$. By \eqref{diag ordering&} and  \eqref{setdefn&}, $|\widetilde{t}_{jj}| \le s$ \text{ a.e.}  for all $i+1 \le j \le k$. We observe that
\[
\essinf_{x\in \widetilde{X}_i} \left| \widetilde{t}_{jj} (x)- \lambda' \right| \ge \text{dist}(\lambda', \overline{s\mathbb{D}})
\]
for every $i+1 \le j \le k$. Let $\tilde{d} = \text{dist}(\lambda', \overline{s\mathbb{D}}) > 0$. Since $\essinf_{x\in X_i} \left| \widetilde{t}_{jj} (x)- \lambda' \right| \ge \tilde{d} > 0$, Lemma \ref{lemma:inverse} implies that $(\widetilde{T}-\lambda')_{(k-i)}\big|_{\widetilde{X}_i}$ is invertible in $M_{k-i}\!\left(L^\infty(\widetilde{X}_i, \mu|_{\widetilde{X}_i})\right)$. Let
\begin{equation*}
    {(\widetilde{T}-\lambda')}_{(inv)} := \sum_{i=0}^{k-1} \begin{pmatrix}  0 & 0 \\ 0 & (\widetilde{T} - \lambda')_{(k-i)}|_{\widetilde{X}_i})^{-1} \end{pmatrix} \in M_k(L^{\infty}(X, \mu)).
\end{equation*}
Then by using \eqref{TQ1} and \eqref{TQinv1}
\begin{align*}
    {(\widetilde{T}-\lambda')}_{(inv)} \big((I - \widetilde{Q}'(A, s)) (\widetilde{T} - \lambda')\big) &= \sum_{i=0}^{k-1} \begin{pmatrix} 0 & 0 \\ 0 & (\widetilde{T}-\lambda')_{(k-i)}\big|_{\widetilde{X}_i} \end{pmatrix} \begin{pmatrix}  0 & 0 \\ 0 & (\widetilde{T} - \lambda')_{(k-i)}|_{\widetilde{X}_i})^{-1} \end{pmatrix} \\
    &= \sum_{i=0}^{k-1} \begin{pmatrix} 0 & 0 \\ 0 & I_{k-i}|_{\widetilde{X}_i} \end{pmatrix} \\
    &=\sum_{i=0}^{k-1} \mathbbm{1}_{\widetilde{X}_i} \cdot \left( \sum_{j=i+1}^k E_{jj} \right) \\
    &= I - \widetilde{Q}'(A, s),
\end{align*}
 which implies $ (I - \widetilde{Q}(A, s)) A - \lambda' (I - \widetilde{Q}(A, s))$ is invertible in $(I-\widetilde{Q}(A, s)) \mathcal{M} (I-\widetilde{Q}(A, s))$. Therefore, $\lambda' \notin \sigma((I - \widetilde{Q}(A, s)) A)$, proving $\sigma((I - \widetilde{Q}(A, s)) A) \subseteq  \overline{s\mathbb{D}}$. \\
\end{proof}

Let $P(A, \mathbb{C} \setminus {s\mathbb{D}})$ denote the Haagerup-Schultz projection of $A$ onto the closed disk $\mathbb{C} \, \setminus \, {s\mathbb{D}}$. We similarly compare $\widetilde{Q}(A, s)$ directly to the Haagerup–Schultz projection $P(A, \mathbb{C} \setminus {s\mathbb{D}})$.

\begin{prop}\label{comparison proj&}
    For any $s > 0$ and $\varepsilon > 0$, the following comparison holds:
\begin{equation*}
    \widetilde{Q}(A, s) \le P(A, \mathbb{C} \setminus {s\mathbb{D}}) \le \widetilde{Q}(A, \max\{s-\varepsilon, 0\}).
\end{equation*}
\end{prop}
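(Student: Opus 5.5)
The plan mirrors the proof of Proposition \ref{comparison proj}, with the roles of $E$ and $F$ exchanged, and uses the characterization $F(A,s)=\K_A(\Cpx\setminus s\mathbb{D})$ from Proposition \ref{HS E and F result}(ii). Both inequalities rest on Proposition \ref{spectrum 1&}, combined with the spectral radius formula and an orthogonality relation of the type in Lemma \ref{HS E and F relation}.

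\emph{First inequality.} Set $\mathcal{H}_0=\widetilde{Q}(A,s)\mathcal{H}$. By Lemma \ref{inv proj family&}, $\mathcal{H}_0$ is $A$-invariant. By Proposition \ref{spectrum 1&}(i), $\sigma(A|_{\mathcal{H}_0})\subseteq\Cpx\setminus s\mathbb{D}$, so in particular $A|_{\mathcal{H}_0}$ is invertible in $\widetilde{Q}(A,s)\M\widetilde{Q}(A,s)$ when $s>0$. By spectral mapping, $\sigma((A|_{\mathcal{H}_0})^{-1})\subseteq \frac1s\overline{\mathbb{D}}$, and the spectral radius formula gives $\limsup\|(A|_{\mathcal{H}_0})^{-n}\|^{1/n}\le 1/s$. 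For $\eta\in\mathcal{H}_0$, take $\eta_n=(A|_{\mathcal{H}_0})^{-n}\eta$. Then $A^n\eta_n=\eta$ and $\limsup\|\eta_n\|^{1/n}\le 1/s$, so $\eta\in F(A,s)=P(A,\Cpx\setminus s\mathbb{D})\mathcal{H}$ by Definition \ref{HS E and F definition}(ii) and Proposition \ref{HS E and F result}. This gives $\widetilde{Q}(A,s)\le P(A,\Cpx\setminus s\mathbb{D})$.

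\emph{Second inequality.} Put $s'=\max\{s-\varepsilon,0\}$. If $s'=0$, then $\widetilde{X}_0=\{\widetilde t_{11}=0\}$, and I would handle this case separately; the generic case is $s'>0$. Let $\mathcal{H}_1=(I-\widetilde{Q}(A,s'))\mathcal{H}$, which is $A^*$-invariant by Lemma \ref{inv proj family&}. Taking adjoints in Proposition \ref{spectrum 1&}(ii) gives $\sigma(A^*|_{\mathcal{H}_1})\subseteq\overline{s'\mathbb{D}}$. Hence $\limsup\|(A^*)^n\xi\|^{1/n}\le s'<s$ for every $\xi\in\mathcal{H}_1$, so $\mathcal{H}_1\subseteq E(A^*,r)$ for any $r$ with $s'<r<s$. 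Lemma \ref{HS E and F relation}, stated with the roles of $T$ and $T^*$ swapped, gives $E(A^*,r)\perp F(A,s)$. Therefore $\mathcal{H}_1\perp P(A,\Cpx\setminus s\mathbb{D})\mathcal{H}$, that is, $P(A,\Cpx\setminus s\mathbb{D})\le \widetilde{Q}(A,s')$.

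\emph{Main obstacle.} The delicate point is the orthogonality step. Lemma \ref{HS E and F relation} is literally stated as $E(T,r)\perp F(T^*,s)$, and I need it with $T=A^*$, so that $T^*=A$. Since that lemma holds for every $T\in\M$, this is a direct substitution. The degenerate endpoint $s'=0$, where $\widetilde{Q}(A,0)$ is built from the set $\{|\widetilde t_{11}|>0\}$, needs a separate argument. There I would use that $A^*|_{\mathcal{H}_1}$ is quasinilpotent, so $\mathcal{H}_1\subseteq E(A^*,r)$ for every $r>0$, and apply the same lemma with any $r<s$. A second degenerate case is the first inequality when $s=0$, but the statement only concerns $s>0$.
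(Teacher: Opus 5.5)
Your proof is correct and follows the paper's argument essentially step for step. For the first inequality you invert $A$ on $\widetilde{Q}(A,s)\mathcal{H}$ to land in $F(A,s)$. For the second you bound the spectral radius of $A^*$ on $(I-\widetilde{Q}(A,s'))\mathcal{H}$ and apply Lemma \ref{HS E and F relation} with $T=A^*$.

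Your choice of an intermediate $r\in(s',s)$ is slightly cleaner than the paper's use of $E(A^*,\max\{s-\varepsilon,0\})$. It keeps $r>0$, as Definition \ref{HS E and F definition} and Lemma \ref{HS E and F relation} require, so the separate treatment of $s'=0$ you sketch is not needed.
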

\begin{proof}
By Proposition \ref{spectrum 1&}, the restriction of $A$ to the invariant subspace $\mathcal{H}_0 =  \widetilde{Q}(A, s)\mathcal{H}$, denoted by $A|_{\mathcal{H}_0}$, satisfies $\sigma(A|_{\mathcal{H}_0}) \subseteq  \mathbb{C} \setminus {s\mathbb{D}}$. By the Spectral Mapping Theorem for inverses,
\[
\sigma\left((A|_{\mathcal{H}_0})^{-1}\right) \subseteq \frac{1}{s}\,\overline{\mathbb{D}}.
\] The spectral radius formula then implies
\begin{equation*}
    \lim_{n \to \infty} \|(A|_{\mathcal{H}_0})^{-n}\|^{1/n} = \lim_{n \to \infty} \|(A^n|_{\mathcal{H}_0})\|^{1/n}\le \frac{1}{s}.
\end{equation*}
For any non-zero $\eta \in {\mathcal{H}_0} = \widetilde{Q}(A, s)\mathcal{H}$ with $ \eta_n := (A|_{\mathcal{H}_0})^{-n} \eta $, since $\lim_{n \to \infty} \|\eta\|^{1/n} = 1$, we obtain
\[
\begin{aligned}
\limsup_{n \to \infty} \|\eta_n\|^{\frac{1}{n}} &\le \Big(\lim_{n \to \infty} \left\| (A|_{\mathcal{H}_0})^{-n} \right\|^{\frac{1}{n}} \Big) \Big( \lim_{n \to \infty}  \|\eta\|^{\frac{1}{n}} \Big) \\
&\le \frac{1}{s}.
\end{aligned}
\]
By Definition \ref{HS E and F definition}$(ii)$ and Lemma \ref{HS E and F result}, it follows that $\eta \in P(A, \mathbb{C} \setminus {s\mathbb{D}})$, which proves the first inequality $\widetilde{Q}(A, s) \le P(A, \mathbb{C} \setminus {s\mathbb{D}})$. 

Next, consider the invariant subspace $\mathcal{H}_1 = (I - \widetilde{Q}(A, \max\{s-\varepsilon, 0\}))\mathcal{H}$ of $A^*$. Proposition \ref{spectrum 1&} and taking the adjoint implies that the restriction $A^*|_{\mathcal{H}_1}$ satisfies $\sigma(A^*|_{\mathcal{H}_1}) \subseteq \overline{(\max\{s-\varepsilon, 0\})\mathbb{D}}$.
Applying the spectral radius formula yields
\[
\lim_{n \to \infty} \left\| (A^*|_{\mathcal{H}_1})^{n} \right\|^{\frac{1}{n}} \le \max\{s-\varepsilon, 0\}.
\]
For any $\xi \in {\mathcal{H}_1}=(I - \widetilde{Q}(A, \max\{s-\varepsilon, 0\}))\mathcal{H}$, the norm estimate gives
\[
\begin{aligned}
\limsup_{n \to \infty} \|(A^*)^n\xi\|^{\frac{1}{n}} &\le \Big(\lim_{n \to \infty} \left\| (A^*|_{\mathcal{H}_1})^{n} \right\|^{\frac{1}{n}} \Big) \Big( \lim_{n \to \infty}  \|\xi\|^{\frac{1}{n}} \Big) \\
&\le \max\{s-\varepsilon, 0\}.
\end{aligned}
\]
Definition \ref{HS E and F definition} then gives $\xi \in E(A^*, \max\{s-\varepsilon, 0\})$.~ Lemma \ref{HS E and F relation} implies $E(A^*, \max\{s-\varepsilon, 0\}) \perp F(A, s)$, so $\xi \in E(A^*, \max\{s-\varepsilon, 0\})^{\perp} \subseteq (I - P(A,  \mathbb{C} \setminus{s\mathbb{D}}))\mathcal{H}$. This establishes $P(A, \mathbb{C} \setminus {s \mathbb{D}}) \le \widetilde{Q}(A, \max\{s-\varepsilon, 0\})$, completing the proof.
\end{proof}
By taking the limit as $\varepsilon \to 0^+$ and applying the comparison inequalities established in the above Proposition, we transfer the spectral bounds directly to the Haagerup--Schultz projection $P(A, {\mathbb{C} \setminus s\mathbb{D}})$.
\begin{cor} \label{cor main&}
    For any $s > 0$:
\begin{enumerate}[label=(\roman*),leftmargin=40pt]
   \item $\sigma\big(A \, P(A, \mathbb{C} \setminus s\mathbb{D})\big) \subseteq \mathbb{C} \setminus s\mathbb{D}$,
    \item  $\sigma\big((I -  P(A, \mathbb{C} \setminus s\mathbb{D})) A\big) \subseteq \overline{s\mathbb{D}}$,
\end{enumerate}
where the spectra of $A \, P(A, \mathbb{C} \setminus s\mathbb{D})$ and $(I -  P(A, \mathbb{C} \setminus s\mathbb{D})) A$ are considered as elements of the algebras $  P(A, \mathbb{C} \setminus s\mathbb{D}) \mathcal{M}   P(A, \mathbb{C} \setminus s\mathbb{D})$ and $ (I -  P(A, \mathbb{C} \setminus s\mathbb{D})) \mathcal{M} (I -  P(A, \mathbb{C} \setminus s\mathbb{D}))$, respectively.
\end{cor}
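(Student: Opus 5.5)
The argument mirrors Corollary \ref{cor main}, with the roles of the two projection families exchanged. Write $P := P(A, \mathbb{C}\setminus s\mathbb{D})$ and $s_\varepsilon := \max\{s-\varepsilon,0\}$. By Lemma \ref{inv proj family&}, each $\widetilde{Q}(A,t)$ is an $A$-invariant projection, and $P$ is $A$-invariant as a Haagerup--Schultz projection. Proposition \ref{comparison proj&} gives the sandwich
\[
\widetilde{Q}(A,s) \le P \le \widetilde{Q}(A, s_\varepsilon) \qquad \text{for every } \varepsilon>0.
\]
The plan is to transfer the spectral inclusions of Proposition \ref{spectrum 1&} to $P$ using the monotonicity in Lemma \ref{p,q spectrum proj}.

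\emph{Part (ii).} Since $\widetilde{Q}(A,s) \le P$, the second half of Lemma \ref{p,q spectrum proj} (with $p=\widetilde{Q}(A,s)$ and $q=P$) gives
\[
\sigma((I-P)A) \subseteq \sigma((I-\widetilde{Q}(A,s))A).
\]
Proposition \ref{spectrum 1&}(ii) bounds the right-hand side by $\overline{s\mathbb{D}}$. No limit is needed here.

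\emph{Part (i).} Since $P \le \widetilde{Q}(A,s_\varepsilon)$, the first half of Lemma \ref{p,q spectrum proj} gives
\[
\sigma(AP) \subseteq \sigma(A\widetilde{Q}(A,s_\varepsilon)).
\]
Proposition \ref{spectrum 1&}(i) bounds the right-hand side by $\mathbb{C}\setminus s_\varepsilon\mathbb{D}$. For $\varepsilon<s$ this set is $\mathbb{C}\setminus (s-\varepsilon)\mathbb{D}$. Intersecting over all $\varepsilon\in(0,s)$ gives
\[
\bigcap_{0<\varepsilon<s} \bigl(\mathbb{C}\setminus (s-\varepsilon)\mathbb{D}\bigr) = \{\lambda : |\lambda|\ge s-\varepsilon \ \forall \varepsilon\} = \{|\lambda|\ge s\} = \mathbb{C}\setminus s\mathbb{D},
\]
which is the claimed inclusion.

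The only point needing care is that Lemma \ref{p,q spectrum proj} is stated for invariant projections $p\le q$ with spectra computed in the corresponding corners, which is exactly the setting of both parts. The closed/open endpoint bookkeeping in the limit also has to come out right, and the display above checks it: the intersection of the closed sets $\{|\lambda|\ge s-\varepsilon\}$ is the closed set $\{|\lambda|\ge s\}$. I expect no real obstacle beyond applying the two halves of Lemma \ref{p,q spectrum proj} in the correct direction.
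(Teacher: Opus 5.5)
Your proposal is correct and is essentially the paper's own proof. Part (ii) follows from $\widetilde{Q}(A,s)\le P(A,\mathbb{C}\setminus s\mathbb{D})$ with the second half of Lemma~\ref{p,q spectrum proj} and Proposition~\ref{spectrum 1&}(ii), and part (i) follows from $P(A,\mathbb{C}\setminus s\mathbb{D})\le\widetilde{Q}(A,\max\{s-\varepsilon,0\})$ with the first half of that lemma, Proposition~\ref{spectrum 1&}(i), and $\varepsilon\to0^+$, where your explicit intersection over $\varepsilon\in(0,s)$ simply spells out the paper's limiting step.
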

\begin{proof}
    Combining Lemma \ref{p,q spectrum proj}, Proposition \ref{spectrum 1&}$(i)$, and Proposition \ref{comparison proj&}, we see that for any $s > 0$ and $\varepsilon > 0$,
\[
\sigma\big(A \, P(A, \mathbb{C} \setminus s\mathbb{D})\big) \subseteq \sigma\big(A \, \widetilde{Q}(A, \max\{s-\varepsilon, 0\})\big) \subseteq \mathbb{C} \setminus {\max\{s-\varepsilon, 0\}\mathbb{D}}.
\]
Letting $\varepsilon \rightarrow 0^+$, we obtain that $\sigma\big(A \, P(A, \mathbb{C} \setminus {s \mathbb{D}})\big) \subseteq \mathbb{C} \setminus {s \mathbb{D}}$.
Similarly, using Lemma \ref{p,q spectrum proj}, Proposition \ref{spectrum 1&}$(ii)$, and Proposition \ref{comparison proj&}, for any $s > 0$,
\[
\sigma\big((I - P(A, \mathbb{C} \setminus {s \mathbb{D}}))A\big) \subseteq \sigma\big((I - \widetilde{Q}(A, s))A\big) \subseteq  \overline{s\mathbb{D}}.
\]
\end{proof}
Finally, we prove the main result of this article.

\begin{proof}[Proof of Theorem \ref{main theorem}]
We use Proposition \ref{prop:disks} to show that $A$ is a decomposable operator. Let $D$ be an open disk. Then there exists $\lambda \in \mathbb{C}$ and $s > 0$ such that
$D = \lambda + s\mathbb{D}.$ Using Lemma \ref{lem:basicHS} $(iii)$ we have 
\[
P(A, \overline{D}) = P(A, \lambda + s\overline{\mathbb{D}}) = P(A - \lambda, s\overline{\mathbb{D}}).
\]
By Corollary \ref{cor main}$(i)$,
\begin{align*}
\sigma\big(A P(A, \overline{D})\big) &= \lambda + \sigma\big((A - \lambda) P(A, \overline{D})\big) \\
&= \lambda + \sigma\big((A - \lambda) P(A - \lambda, s\overline{\mathbb{D}})\big) \\
&\subseteq \lambda + s\overline{\mathbb{D}} \\
&= \overline{D}.
\end{align*}
This proves condition \eqref{eq:sigG} of Proposition \ref{prop:disks}.~Similarly, condition \eqref{eq:sigGc}, \eqref{eq:sig1-G} and \eqref{eq:sig1-Gc} of Proposition \ref{prop:disks} follows from Corollary \ref{cor main&}$(i)$, \ref{cor main}$(ii)$, and \ref{cor main&}$(ii)$, respectively. Hence, $A$ is decomposable.
\end{proof}

\section{Illustrative Examples}

In this section, we present two illustrative examples. Firstly, we construct an operator in a $\mathrm{I}_3$ von Neumann algebra that is not a Spectral operator.
\begin{example}
    Let $X = \{0\} \cup \left\{ \frac{1}{n} : n \in \mathbb{N} \right\}$ with the discrete $\sigma$-algebra. Define a probability measure $\mu$ on $X$ by 
\[
\mu(\{0\}) = \frac{1}{2} \quad \text{and} \quad \mu\left(\left\{\frac{1}{n}\right\}\right) = \frac{1}{2^{n+1}} \quad \text{for each } n \in \mathbb{N}.
\]
Consider the measure space $(X, \mu)$, and let $A \in M_3(L^\infty(X, \mu))$ be defined pointwise by
\[
A(x) = \begin{bmatrix} x & 1 & 0 \\ 0 & x & 1 \\ 0 & 0 & 0 \end{bmatrix}, \quad x \in X.
\]
Suppose, to the contrary, that $A$ is a spectral operator. By Theorem~\ref{Theorem: Spectral decom}, $A$ admits a unique decomposition $A = S + T$, where $S$ is a scalar-type operator and $T$ is a quasinilpotent operator commuting with $S$. 

Since $\mu$ is a purely atomic measure, the decomposition holds pointwise almost everywhere (hence everywhere on $X$); that is, for every $x \in X$, $S(x)$ is a scalar-type matrix and $T(x)$ is a quasinilpotent matrix. According to \cite[Theorem 3.4]{NS25}, the unique Dunford  decomposition of the $3 \times 3$ matrix $A(x)$ into its diagonal (scalar-type) and nilpotent parts is given by
\[
A(x) = \begin{bmatrix} x & 0 & -1/x \\ 0 & x & 1 \\ 0 & 0 & 0 \end{bmatrix} + \begin{bmatrix} 0 & 1 & 1/x \\ 0 & 0 & 0 \\ 0 & 0 & 0 \end{bmatrix}, \quad x \in X \setminus \{0\}.
\]
By uniqueness of the decomposition at each point $x \neq 0$, the nilpotent component $T(x)$ must equal 
\[
T(x) = \begin{bmatrix} 0 & 1 & 1/x \\ 0 & 0 & 0 \\ 0 & 0 & 0 \end{bmatrix}.
\]
However, as $x = 1/n$ ranges over $X \setminus \{0\}$, the set $\{1/x : x \in X \setminus \{0\}\} $ is unbounded, which implies that $\|T(x)\| \to \infty$ as $x \to 0$. Consequently, $T \notin M_3(L^\infty(X, \mu))$, contradicting the assumption that $T$ is a bounded operator. Thus, $A$ cannot be a spectral operator.
\end{example}

Next, we provide an example of an operator within the separable type $I_\infty$ factor, specifically $\mathcal{B}(\ell^2(\mathbb{N}))$, which fails to satisfy the norm convergence property.

\begin{example}
Let $S$ denote the unilateral shift operator on $\ell^2(\mathbb{N})$, defined by
\[ S(x_1, x_2, x_3, \dots) = (0, x_1, x_2, x_3, \dots). \]
For any $\lambda \in \mathbb{C}$ such that $|\lambda| < 1$, we define the vector
\[ x_\lambda := (1, \lambda, \lambda^2, \lambda^3, \dots) \in \ell^2(\mathbb{N}). \]
It is easily verified that $S^* x_\lambda = \lambda x_\lambda$. Consequently, $\lambda \in \sigma(S^*)$, which implies that the open unit disk $\mathbb{D}$ is contained in the spectrum $\sigma(S^*)$. Since the spectrum is a closed set, we have $\overline{\mathbb{D}} \subseteq \sigma(S^*)$.
Conversely, since $\|S\| = 1$, the spectral radius satisfies $r(S) \leq \|S\| = 1$, hence $\sigma(S) \subseteq \overline{\mathbb{D}}$. Therefore, we conclude that
\[ \sigma(S) = \sigma(S^*) =\overline{\mathbb{D}}. \]
We now examine the behavior of the adjoint $S^*$, which acts as the backward shift
\[ S^*(x_1, x_2, x_3, \dots) = (x_2, x_3, \dots). \]
Iterating this operator $n$ times yields
\[ S^{*n}(x_1, x_2, x_3, \dots) = (x_{n+1}, x_{n+2}, \dots). \]
The composition with its adjoint $S^n$ results in
\[ S^n S^{*n}(x_1, x_2, x_3, \dots) = (\underbrace{0, \dots, 0}_{n}, x_{n+1}, x_{n+2}, \dots). \]
This can be expressed as $S^n S^{*n} = I - P_n$, where $P_n$ denotes the orthogonal projection onto the subspace $\text{span}\{e_1, e_2, \dots, e_n\}$. Note that
\[ |(S^*)^n|^{1/n} = (S^n S^{*n})^{1/2n} = I - P_n. \]
As $n \to \infty$, the projections $P_n$ converge to the identity operator $I$ in the Strong Operator Topology (SOT). It follows that $|(S^*)^n|^{1/n}$ converge to the zero operator in the SOT.
Under these conditions, the Brown measure $\mu_{S^*}$ of $S^*$ is the Dirac mass at the origin, $\delta_0$. Consequently, we observe a strict inclusion
\[ \text{supp}(\mu_{S^*}) = \{0\} \subsetneq \sigma(S^*) = \overline{\mathbb{D}}. \]
By the criteria established in \cite[Theorem $4.3$]{DNZ17}, the fact that the support of the Brown measure is strictly contained within the spectrum implies that $S^*$ does not possess the norm convergence property.
\end{example}

\section{Acknowledgments}
I am deeply grateful to my advisor, Dr.~Ken Dykema, for his guidance, encouragement, and support throughout this work.~I would also like to thank \href{https://shekhawatrenu.github.io/ShekhawatRenu/index.html}{Dr.~Renu Shekhawat} for several valuable and stimulating discussions that helped shape my understanding of the ideas developed in this article.

\end{document}